\theoremstyle{plain}
 \theoremstyle{remark} 
\newtheorem {theo} {\bf Theorem} [section]
\newtheorem {lem} [theo] {\bf Lemma}
\newtheorem {note} [theo] {\bf Note}
\newtheorem{rem}{\bf Remark}[section]
\newtheorem{comp} {\bf Comparison}[section]
\numberwithin{equation}{section}
\begin{document}
\title[Nonuniform sampling and  approximation]{Nonuniform sampling and  approximation in Sobolev space from the perturbation of framelet system}
\author{Youfa Li}
\address{College of Mathematics and Information Science\\
Guangxi University,  Nanning, China }
\email{youfalee@hotmail.com}
\author{Deguang Han}
\address{Department of Mathematics\\
University of Central Florida\\ Orlando, FL 32816}
\email{deguang.han@ucf.edu}
\author{Shouzhi Yang}
\address{Department of Mathematics\\
University of Shantou\\ Shantou, China}
\email{szyang@stu.edu.cn}
\author{Ganji Huang}
\address{College of Mathematics and Information Science\\
Guangxi University,  Naning, China }
\email{gjhuang@gxu.edu.cn}
\thanks{Youfa Li is partially supported by Natural Science Foundation of China (Nos: 61961003, 61561006, 11501132),  Natural Science Foundation of Guangxi (Nos: 2018JJA110110, 2016GXNSFAA380049) and  the talent project of  Education Department of Guangxi Government  for Young-Middle-Aged backbone teachers. Deguang Han  is partially supported by the NSF grant  DMS-1712602.}
\keywords{Sobolev space, framelet series, truncation error,
perturbation error, nonuniform sampling and  approximation.}
\subjclass[2010]{Primary 42C40; 65T60; 94A20}

\date{\today}

\begin{abstract} The Sobolev space $H^{\varsigma}(\mathbb{R}^{d})$, where $\varsigma > d/2$, is an
important function space that  has many applications in various
areas of research. Attributed to the inertia of a measurement
instrument, it is desirable in sampling theory to recover  a
function by its nonuniform sampling.
In the present paper, based on dual
framelet systems  for  the Sobolev space pair
$(H^{s}(\mathbb{R}^{d}), H^{-s}(\mathbb{R}^{d}))$, where $d/2<s<\varsigma$, we
investigate the problem of constructing the
 approximations to all the functions in $H^{\varsigma}(\mathbb{R}^{d})$ by  nonuniform sampling.
We first
establish the convergence rate  of the framelet  series in
$(H^{s}(\mathbb{R}^{d}), H^{-s}(\mathbb{R}^{d}))$, and  then
construct the framelet approximation operator that  acts  on  the entire
space $H^{\varsigma}(\mathbb{R}^{d})$.
We examine the
stability property for  the framelet
approximation operator with respect to the perturbations of shift parameters, and obtain
an estimate bound for the perturbation error. Our result  shows that
under the condition $d/2<s<\varsigma$, the approximation operator is robust
to shift perturbations. Motivated  by some recent work on nonuniform sampling and  approximation in
Sobolev space (e.g., \cite{Hamm1}), we don't require  the perturbation sequence to be in
 $\ell^{\alpha}(\mathbb{Z}^{d})$. Our results  allow us to establish the
 approximation for every function in
$H^{\varsigma}(\mathbb{R}^{d})$ by nonuniform sampling.
In particular, the approximation error is robust to the jittering of the samples.
\end{abstract}
\maketitle

\section{Introduction}\label{section1}

Sampling is a fundamental tool for the conversion between an
analogue signal and its digital form (A/D). The most classical
sampling theory is the Whittaker-Kotelnikov-Shannon (WKS) sampling
theorem \cite{C. Shannon1}, which states   that a
bandlimited signal can be perfectly reconstructed  if it is sampled
at a rate greater than its Nyquist frequency.
The WKS sampling theorem holds only for bandlimited signals. In
order to extend the sampling theorem to non-bandlimited signals,
researchers have established various sampling theorems for many
other function spaces. Such examples include  the sampling theory
for shift-invariant subspaces (c.f.\cite{Aldroubi1,Aldroubi3,Aldroubi2,Boor1,Johnson1,Johnson2,Wenchang3,df1,df0}), reproducing kernel
subspaces of $L^{2}(\mathbb{R}^{d})$ (c.f.\cite{HD2,HammJ,Hangelbroek,HD3}) and   subspaces from the
generalized sinc functions (c.f.\cite{CQL}).

\subsection{The goal of applicable scope and sampling flexibility}

For any $\varsigma\in\mathbb{R}$,  the Sobolev space $H^{\varsigma}(\mathbb{R}^{d})$
is defined as
\begin{align}\begin{array}{lllll}\label{defi_s}\displaystyle  H^{\varsigma}(\mathbb{R}^{d}):=\Big\{f: \int_{\mathbb{R}^{d}}|\widehat{f}(\xi)|^{2} (1+||\xi||_{2}^{2})^{\varsigma}d\xi<\infty\Big\},\end{array}\end{align}
 where
$\widehat{f}(\xi):=\int_{\mathbb{R}^{d}}f(x)e^{-\textbf{i}x\cdot \xi}dx$ is
the Fourier transform of $f$. For $\varsigma>d/2$, by the similar  analysis in \cite[Chapter 9.1]{Matla}
one  can check that the functions in  $H^{\varsigma}(\mathbb{R}^{d})$ are continuous. From now on it is assumed that
$\varsigma>d/2.$
The function theory in
$H^{\varsigma}(\mathbb{R}^{d})$ is important for many problems. Among others these include
the boundedness of the Fourier multiplier operator
\cite{youyong3, youyong2}, viscous shallow water system
\cite{youyong4,youyong5}, PDE or ODE \cite{HanZ,youyong6}, and signal analysis
\cite{Matla}. Moreover, it is easy to check that many
important function  spaces such as the bandlimited function space \cite{C. Shannon1},
shift-invariant  subspace  \cite{Aldroubi1,Aldroubi3,Aldroubi2,Chui,AFJ04,Hanbin1,Qiyu0}  (in which the generator is continuous)
are   contained in $H^{\varsigma}(\mathbb{R}^{d})$ for some appropriate $\varsigma>d/2$.  However,  in general it not easy to determine whether a  function in $H^{\varsigma}(\mathbb{R}^{d})$  belongs to a desired subspace or not. Therefore it is practically useful to establish some recovery methods  for the entire space
$H^{\varsigma}(\mathbb{R}^{d})$. Besides  the aspect of  \emph{applicable scope}, the samples we acquire may also well
be jittered and thus usually nonuniform \cite{Wenchangui,Wenchangui1,Qiyu0}. Therefore the goal of this paper is to establish a
sampling theory for the entire space $H^{\varsigma}(\mathbb{R^{d}}^{d})$. This will allow us  to construct the approximations to \emph{all} the functions in $H^{\varsigma}(\mathbb{R^{d}}^{d}),$ which admit   \emph{nonuniform} sampling points. To the best of our knowledge, this has not been examined in the literature. Our goal will be  achieved in Theorem \ref{Theorem x4} by  the theory of dual framelets in $(H^{s}(\mathbb{R}^{d}), H^{-s}(\mathbb{R}^{d})),$
with $d/2<s<\varsigma$,
which was introduced by Han and Shen \cite{Hanbin1}. In what follows we introduce some necessary  terminologies  for
framelets in Sobolev spaces. More details can be found in  \cite{Hanbin1} and Han's
continuing work \cite{Hanbin2,Hanbin3,BHBOOK} on dual framelets  in  distribution spaces.

\subsection{Preliminary  terminologies for  dual framelets in dual Sobolev spaces}
By \eqref{defi_s}, $H^{\varsigma}(\mathbb{R}^{d})$ is equipped with the
inner product $\langle\cdot,\cdot\rangle_{H^{\varsigma}(\mathbb{R}^{d})}$
defined by
\begin{align}\label{neiji} \begin{array}{lllll} \displaystyle \langle f,g\rangle_{H^{\varsigma}(\mathbb{R}^{d})}:=\frac{1}{(2\pi)^{d}}\int_{\mathbb{R}^{d}}\widehat{f}(\xi)\overline{\widehat{g}(\xi)}(1+||\xi||_{2}^{2})^{\varsigma}d\xi,
\ \ \ \ \forall f,g\in H^{\varsigma}(\mathbb{R}^{d}),\end{array}\end{align} where
$\overline{\widehat{g}}$ is the complex conjugate.
Naturally,  the deduced norm is defined   by
\begin{align}\notag \begin{array}{lllll} \displaystyle ||f||_{H^{\varsigma}(\mathbb{R}^{d})}:=\frac{1}{(2\pi)^{d/2}}\Big(\int_{\mathbb{R}^{d}}|\widehat{f}(\xi)|^{2}(1+||\xi||_{2}^{2})^{\varsigma}d\xi\Big)^{1/2}, \quad \forall f\in H^{\varsigma}(\mathbb{R}^{d}).\end{array}\end{align}
It is easy to check that  the  functional $\langle \cdot,
\cdot \rangle: (H^{\varsigma}(\mathbb{R}^{d}),
H^{-\varsigma}(\mathbb{R}^{d}))\longrightarrow \mathbb{C}$ defined by
\begin{align}\begin{array}{lllll}\notag \displaystyle\langle f, g
\rangle:=\frac{1}{(2\pi)^{d}}\int_{\mathbb{R}^{d}}\widehat{f}(\xi)\overline{\widehat{g}(\xi)}d\xi,
\ \forall f\in H^{\varsigma}(\mathbb{R}^{d}),  g\in H^{-\varsigma}(\mathbb{R}^{d})\end{array}\end{align}
can be bounded by  $|\langle f, g \rangle|\leq
||f||_{H^{\varsigma}(\mathbb{R}^{d})}||g||_{H^{-\varsigma}(\mathbb{R}^{d})}.$
Clearly,
$H^{\varsigma_{1}}(\mathbb{R}^{d})$ $\supseteq H^{\varsigma_{2}}(\mathbb{R}^{d})$ if
and only if $\varsigma_{1}\leq \varsigma_{2}$. Moreover,
$H^{0}(\mathbb{R}^{d})=L^{2}(\mathbb{R}^{d})$  and  correspondingly
$||\cdot||_{H^{0}(\mathbb{R}^{d})}=||\cdot||_{L^{2}}$.
 For any two functions $f, g: \mathbb{R}^{d}\longrightarrow \mathbb{C}$ and $\mu\in \mathbb{R}$, define their  bracket product $[f,
g]_{\mu}$ by
\begin{align} \begin{array}{lllll}\displaystyle\label{opq}[f,
g]_{\mu}(\xi):=\sum_{k\in\mathbb{Z}^{d}}f(\xi+2k\pi)\overline{g(\xi+2k\pi)}(1+||\xi+2k\pi||_{2}^{2})^{\mu},\end{array}\end{align}
whenever the above series converge.
Readers can  refer to Han's method \cite{WEZ,BHBOOK}
for estimating  the bracket product.

A $d\times d$ integer matrix $M$ is referred to as  a dilation
matrix if all its eigenvalues are strictly larger than $1$ in
modulus. Throughout this paper, we are interested in the case that
$M$ is isotropic. Specifically, $M$ is similar to
$\mbox{diag}(\lambda_{1},\lambda_{2},\cdots,\lambda_{d})$ with
$$|\lambda_{1}|=|\lambda_{2}|=\ldots=|\lambda_{d}|=m:=|\det M|^{1/d}.$$
%
Suppose that $\phi\in H^{s}(\mathbb{R}^{d})$ is an
$M$-refinable function given via the  $M$-refinement  equation
\begin{align}\label{yy1}\widehat{\phi}(M^{T}\cdot)=\widehat{a}(\cdot)\widehat{\phi}(\cdot),\end{align}
where $\widehat{a}(\cdot):=\sum_{k\in\mathbb{Z}^{d}}a[k]e^{ik\cdot}$
is referred to as the mask symbol of $\phi$, and
 $\{\psi^{\ell}\}^{L}_{\ell=1}$ is a set of  wavelet functions
 defined by
\begin{align}\begin{array}{lllll} \label{yy3}\widehat{\psi^{\ell}}(M^{T}\cdot)=\widehat{b^{\ell}}(\cdot)\widehat{\phi}(\cdot),\end{array}\end{align}
where the $2\pi \mathbb{Z}^{d}$-periodic trigonometric polynomial
$\widehat{b^{\ell}}(\cdot)$ is the mask symbol of $\psi^{\ell}$. Now
a wavelet system $X^{s}(\phi;$ $\psi^{1},\ldots,\psi^{L})$ in
$H^{s}(\mathbb{R}^{d})$ is defined as
\begin{align}\begin{array}{lllll}\label{lisan}
X^{s}(\phi;\psi^{1},\ldots,\psi^{L})&:=\{\phi_{0,k}:k\in
\mathbb{Z}^{d}\}
\\&\quad \cup\{\psi^{\ell,s}_{j,k}:k\in\mathbb{Z}^{d}, j\in
\mathbb{N}_{0},\ell=1,\ldots,L\},
\end{array}\end{align}
where $\phi_{0,k}=\phi(\cdot-k)$,
$\psi^{\ell,s}_{j,k}=m^{j(d/2-s)}\psi^{\ell}(M^{j}\cdot-k)$ and
$\mathbb{N}_{0}:=\mathbb{N}\cup\{0\}$. If there exist  two positive
constants $C_{1}$ and $C_{2}$ such that for every $ f\in H^{s}(\mathbb{R}^{d}),$
\begin{align}\begin{array}{lllll}\label{hanbin3}\displaystyle
C_{1}||f||_{H^{s}(\mathbb{R}^{d})}^{2}\leq
\sum_{k\in\mathbb{Z}^{d}}|\langle
f,\phi_{0,k}\rangle_{H^{s}(\mathbb{R}^{d})}|^{2}
 +\sum^{L}_{\ell=1}\sum_{j\in\mathbb{N}_{0}}\sum_{k\in
\mathbb{Z}^{d}} |\langle f,\psi^{\ell,s}_{j,k}
\rangle_{H^{s}(\mathbb{R}^{d})}|^{2} \leq
C_{2}||f||_{H^{s}(\mathbb{R}^{d})}^{2},
\end{array}\end{align}
then we say that
 $X^{s}(\phi;\psi^{1},\ldots,\psi^{L})$ is an $M$-framelet system in
$H^{s}(\mathbb{R}^{d})$. If there exists another $M$-framelet system
$X^{-s}(\widetilde{\phi};\widetilde{\psi}^{1},\ldots,\widetilde{\psi}^{L})$
in $H^{-s}(\mathbb{R}^{d})$ such that for any $ f\in
H^{s}(\mathbb{R}^{d})$ and $ g\in H^{-s}(\mathbb{R}^{d})$,
\begin{align}\begin{array}{lllll}\label{hanbin4}\displaystyle \langle f,g\rangle=\sum_{k\in \mathbb{Z}^{d}}\langle
\phi_{0,k}, g\rangle\langle f, \widetilde{\phi}_{0,k}\rangle+
\sum^{L}_{\ell=1}\sum_{j\in \mathbb{\mathbb{N}}_{0}}\sum_{k\in
\mathbb{Z}^{d}} \langle \psi^{\ell,s}_{j,k}, g \rangle\langle f,
\widetilde{\psi}^{\ell,-s}_{j,k} \rangle,
\end{array}\end{align}
then we say that $X^{s}(\phi;\psi^{1},\ldots,\psi^{L})$ and
$X^{-s}(\widetilde{\phi};\widetilde{\psi}^{1},\ldots,\widetilde{\psi}^{L})$
form a pair of dual $M$-framelet systems in
$(H^{s}(\mathbb{R}^{d}),H^{-s}(\mathbb{R}^{d}))$. For any function
$f\in H^{s}(\mathbb{R}^{d})$, it follows
 from \eqref{hanbin4} that
\begin{align}\begin{array}{lll}\label{Albert1}
\displaystyle f=\sum_{k\in \mathbb{Z}^{d}}\langle f,
\widetilde{\phi}_{0,k}\rangle \phi_{0,k}+
\sum^{L}_{\ell=1}\sum_{j\in \mathbb{\mathbb{N}}_{0}}\sum_{k\in
\mathbb{Z}^{d}} \langle f, \widetilde{\psi}^{\ell,-s}_{j,k} \rangle
\psi^{\ell,s}_{j,k}.
\end{array}\end{align}

\begin{note}\label{ghch}(i)
The framelets in $L^{2}(\mathbb{R}^{d})$ must have at least one  vanishing
moment  such that the framelet series converge  unconditionally
(c.f. \cite{AFJ04,Ehler,Hanbin1}). However
when $s>0$,
 the vanishing
moment of $\psi^{\ell}$ is not necessary for the convergence of the
series in \eqref{Albert1}, $\ell=1, \ldots, L$. This is the most
significant difference between the framelets in
$L^{2}(\mathbb{R}^{d})$ and those in
$(H^{s}(\mathbb{R}^{d}),H^{-s}(\mathbb{R}^{d}))$. For more details
about the  conditions for the convergence of  framelet series
in $(H^{s}(\mathbb{R}^{d}),H^{-s}(\mathbb{R}^{d}))$, readers can
refer  to \cite{Hanbin1,BHBOOK}. (ii) Our goal is to construct the approximations to all the functions in $H^{\varsigma}(\mathbb{R}^{d})$
($\varsigma>d/2) $. The  construction scheme is sketched as follows.  We first choose $d/2<s<\varsigma$,  and design special dual framelet systems
$X^{s}(\phi;\psi^{1},\ldots,\psi^{L})$ and
$X^{-s}(\widetilde{\phi};\widetilde{\psi}^{1},\ldots,\widetilde{\psi}^{L})$ in  $(H^{s}(\mathbb{R}^{d}),H^{-s}(\mathbb{R}^{d}))$.
 Recall that the target  $f\in H^{\varsigma}(\mathbb{R}^{d})\subseteq H^{s}(\mathbb{R}^{d})$.
 Then we shall use \eqref{Albert1} to establish the approximation to $f$. The reason for $\varsigma>s$
 is postponed to Remark \ref{smoothnessreason}. \end{note}

\subsection{Main results and structure  of the present paper}
 The main  results of  the present paper are stated in Thorem \ref{Theorem x1}, Theorem \ref{Theorem xe4} and Theorem \ref{Theorem x4}.
As assumed  in Note \ref{ghch} (ii), the target $f\in H^{\varsigma}(\mathbb{R}^{d})$,  and
$X^{s}(\phi;\psi^{1},\ldots,\psi^{L})$ and
$X^{-s}(\widetilde{\phi};\widetilde{\psi}^{1},\ldots,\widetilde{\psi}^{L})$ are the dual   framelet systems  in  $(H^{s}(\mathbb{R}^{d}),H^{-s}(\mathbb{R}^{d}))$ where
$d/2<s<\varsigma$.
It will be clear  in Theorem \ref{Theorem x4} that   the
truncation version  $\mathcal{S}_{\phi}^{N}f$ of the series in
\eqref{Albert1} with respect to the scale $j$, defined by
\begin{align}\label{rt} \mathcal{S}_{\phi}^{N}f:=\sum_{k\in
\mathbb{Z}^{d}}\langle f, \widetilde{\phi}_{0,k}\rangle
\phi_{0,k}+\sum^{L}_{\ell=1}\sum^{N-1}_{j=0}\sum_{k\in
\mathbb{Z}^{d}} \langle f, \widetilde{\psi}^{\ell,-s}_{j,k} \rangle
\psi^{\ell,s}_{j,k}, \end{align}
is crucial for establishing the sampling and  approximation.
Naturally, the first
  problem  is how to
 estimate the approximation error $||(I-\mathcal{S}_{\phi}^{N})f||_{H^{s}}$ for any $f\in H^{\varsigma}(\mathbb{R}^{d})$, where $I$ is the identity operator.
 The answer to  this problem will be given  in Theorem \ref{Theorem x1}. It should be noted that the estimation of the  approximation error established in this paper
 holds for any $f\in H^{\varsigma}(\mathbb{R}^{d})$.
 In \cite{fagep},
the error  estimation was   established   for a class of target functions.
For one-dimensional case,  the error was estimated in the sense of  Sobolev seminorm  by \cite[Corollary 4.7.3]{BHBOOK}.
Theorem \ref{Theorem x1} is  not the trivial generalization of \cite{fagep} and \cite[Corollary 4.7.3]{BHBOOK}.
More details of comparison will be given  in Comparison \ref{duibi1}.

 We next turn to the perturbation of  $\mathcal{S}_{\phi}^{N}f$.
It  will be clear  in \eqref{suanzidigyi} and \eqref{integral1}
 that the sampling  nonuniformity  is
substantially derived from the perturbation of shift parameter $k$ of the
 system $\{\widetilde{\phi}^{-s}_{N,k}\}_{k\in
\mathbb{Z}^{d}}\subseteq H^{-s}(\mathbb{R}^{d})$, where  $\widetilde{\phi}^{-s}_{N,k}=m^{N(d/2+s)}\widetilde{\phi}(M^{N}\cdot-k)$. Thus, in order to construct the  approximation by nonuniform
sampling,  we need to   estimate    the
perturbation error of $||(I-\mathcal{S}_{\phi,\underline{\varepsilon}}^{N})f||_{L^{2}}$, where $\underline{\varepsilon}=\{\varepsilon_{k}\}_{k\in \mathbb{Z}^{d}}$ is the perturbation sequence of
the  shift sequence   $\{k\}_{k\in \mathbb{Z}^{d}}$.
Theorem \ref{Theorem xe4} establishes such an error estimation.
 Motivated by Hamm's recent work \cite{Hamm1} on the nonuniform
sampling-based  approximation, the perturbation sequence in the present paper is not required to sit  in $\ell^{2}(\mathbb{Z}^{d})$(the square summable
sequence space).   Since the  perturbation sequence  is not necessarily  in $\ell^{2}(\mathbb{Z}^{d})$,
the  error can not be estimated by the
brute force estimation but by using some crucial techniques in Subsection \ref{guanjianjishu}. More details about the techniques
will be  summarized  in Subsection \ref{ganjisong}.


The paper is organized as follows. In Section \ref{sectionss}  we shall establish an error estimation  of $||(I-\mathcal{S}_{\phi}^{N})f||_{H^{s}(\mathbb{R}^{d})}$
for any $f\in H^{\varsigma}(\mathbb{R}^{d})$,  where $\varsigma>s>d/2$. The perturbation error $||f-\mathcal{S}_{\phi; \underline{\varepsilon}}^{N}f||_{L^{2}}$ will be  estimated  in Section \ref{sectionyy} (Theorem \ref{Theorem xe4}), where
$\mathcal{S}_{\phi; \underline{\varepsilon}}^{N}f$ is the perturbation version   of $\mathcal{S}_{\phi}^{N}f$ with
$\underline{\varepsilon}$ being  the  the shift-perturbation sequence mentioned previously.
In  Section \ref{bugzecy} (Theorem \ref{Theorem x4}) we will  present a main application of our two main
results in Theorem \ref{Theorem x1} and Theorem \ref{Theorem xe4}. Specifically,  by using a pair of dual framelets for
$(H^{s}(\mathbb{R}^{d}), H^{-s}(\mathbb{R}^{d})),$   we are able to
construct the nonuniform sampling-based  approximation to any function in
$H^{\varsigma}(\mathbb{R}^{d})$. Our main approximation results in  Theorem \ref{Theorem x1},  Theorem \ref{Theorem xe4}
and Theorem \ref{Theorem x4}, and the estimation techniques are not  trivial generalizations of the results available  in the literature.
In Section \ref{comparison},  we make  detailed comparisons between   the main
results and the estimation techniques  of this paper with  the existing ones in the literature.
Two simulation examples are presented in Section \ref{diwujie} to demonstrate the approximation
efficiency.


\section{Framelet approximation system in Sobolev space}\label{sectionss}
In Lemma \ref{Lemma 2.1} we will   estimate  the convergence rate of  the
coefficient sequence $\big\{\langle f,
\widetilde{\psi}^{\ell,-s}_{j,k} \rangle\big\}_{j,k}$ in \eqref{Albert1} with respect to the scale $j$.
Based on Lemma \ref{Lemma 2.1}  we will  establish an  estimation  for
$||(I-\mathcal{S}_{\phi}^{N})f||_{H^{s}(\mathbb{R}^{d})}$
in  Theorem \ref{Theorem x1}, where $f\in H^{\varsigma}(\mathbb{R}^{d})$ with $\varsigma>s$. The following notations and definitions are needed for our discussion.


For any $\alpha:=(\alpha_{1}, \alpha_{2}, \ldots,
\alpha_{d})\in \mathbb{N}^{d}_{0}$ and $x:=(x_{1}, x_{2}, \ldots,
x_{d})\in\mathbb{R}^{d}$, define
$x^{\alpha}:=\prod^{d}_{k=1}x^{\alpha_{k}}_{k}$. For a function $f:
\mathbb{R}^{d}\longrightarrow \mathbb{C}$, its $\alpha$th partial
derivative $\frac{\partial^{\alpha}}{\partial x^{\alpha}}f$ is
defined as
$$\frac{\partial^{\alpha}}{\partial x^{\alpha}}f=\frac{\partial_{1}^{\alpha_{1}}\partial_{2}^{\alpha_{2}}\cdots
\partial_{d}^{\alpha_{d}}}{\partial x_{1}^{\alpha_{1}}\partial x_{2}^{\alpha_{2}}\cdots
\partial x_{d}^{\alpha_{d}}}f.$$ We say that a function  $f:
\mathbb{R}^{d}\longrightarrow \mathbb{C}$ has $\kappa+1
(\in\mathbb{N})$ vanishing moments  if
$$\frac{\partial^{\alpha}}{\partial x^{\alpha}}\widehat{f}(0)=0$$
for any  $\alpha\in\mathbb{N}^{d}_{0}$ such that $||\alpha||_{1}\leq
\kappa$. The characteristic function of the set $E\subseteq \mathbb{R}^{d}$
is denoted by $\chi_{E}$. Motivated by \cite[Theorem 4.6.5]{BHBOOK},
we establish the convergence rate of the wavelet series in $H^{\varsigma}(\mathbb{R}^{d})$ in the following lemma.

\begin{lem}\label{Lemma 2.1}
Suppose that $\widetilde{\phi}\in H^{-\varsigma}(\mathbb{R}^{d})$ is $M$-refinable such that  $||[\widehat{\widetilde{\phi}}, \widehat{\widetilde{\phi}}]_{-\varsigma}||_{L^{\infty}(\mathbb{T}^{d})}<\infty$,
where  $\varsigma>0$ and   $\mathbb{T}=(-\pi, \pi]$.
Moreover, $\widetilde{\phi}$ belongs to $ H^{-s}(\mathbb{R}^{d})$  with $0<s<\varsigma$.
A wavelet function
$\widetilde{\psi}$ given   by
$\widehat{\widetilde{\psi}}(M^{T}\cdot)=\widehat{\widetilde{b}}(\cdot)\widehat{\widetilde{\phi}}(\cdot)$
has $\kappa+1$ vanishing moments, where $\widehat{\widetilde{b}}$ is
a $2\pi\mathbb{Z}^{d}$-periodic trigonometric polynomial,
$\kappa\in\mathbb{N}_{0}$ and $\kappa+1>\varsigma$. That is, there exists $g_{1}>0$ such that
$|\widehat{\widetilde{\psi}}(\xi)|\leq g_{1}||\xi||^{\kappa+1}_{2}$ for a.e. $\xi\in \mathbb{T}^{d}$.
 Then there exists a positive constant $H_{\widetilde{\psi}}(\varsigma,s)$ such
 that for any $ f\in H^{\varsigma}(\mathbb{R}^{d})$,
\begin{align} \label{lanlan}\sum^{\infty}_{j=N}\sum_{k\in \mathbb{Z}^{d}}|\langle
f, \widetilde{\psi}^{-s}_{j,k}\rangle|^{2}\leq
H_{\widetilde{\psi}}(\varsigma,s)||f||^{2}_{H^{\varsigma}(\mathbb{R}^{d})} m^{-2N(\varsigma-s)}. \end{align}
\end{lem}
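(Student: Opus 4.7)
My plan is to prove the per-scale estimate $\sum_{k}|\langle f,\widetilde{\psi}^{-s}_{j,k}\rangle|^{2}\leq C\,m^{-2j(\varsigma-s)}\|f\|_{H^{\varsigma}(\mathbb{R}^{d})}^{2}$ with a constant $C$ independent of $j$, and then to sum the geometric series from $j=N$; since $\varsigma>s$ and $m>1$, this yields \eqref{lanlan} with $H_{\widetilde{\psi}}(\varsigma,s)=C/(1-m^{-2(\varsigma-s)})$. To derive the per-scale bound, I would first Fourier-transform $\widetilde{\psi}^{-s}_{j,k}$ and apply the change of variable $\xi=(M^{T})^{j}\zeta$ in the Parseval pairing, then split $\mathbb{R}^{d}=\bigsqcup_{l\in\mathbb{Z}^{d}}(\mathbb{T}^{d}+2\pi l)$ and apply Parseval on $\mathbb{T}^{d}$ to the resulting trigonometric coefficients, obtaining
\[
\sum_{k\in\mathbb{Z}^{d}}|\langle f,\widetilde{\psi}^{-s}_{j,k}\rangle|^{2}=\frac{m^{j(d+2s)}}{(2\pi)^{d}}\int_{\mathbb{T}^{d}}\Bigl|\sum_{l\in\mathbb{Z}^{d}}\widehat{f}((M^{T})^{j}(\zeta+2\pi l))\,\overline{\widehat{\widetilde{\psi}}(\zeta+2\pi l)}\Bigr|^{2}d\zeta.
\]

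To decouple $\widehat{f}$ from $\widehat{\widetilde{\psi}}$ I would apply Cauchy--Schwarz under the inner sum with the conjugate weights $(1+\|(M^{T})^{j}(\zeta+2\pi l)\|_{2}^{2})^{\pm\varsigma}$, bounding the integrand pointwise by $A_{j}(\zeta)\,B_{j}(\zeta)$ where
\[
A_{j}(\zeta)=\sum_{l}|\widehat{f}((M^{T})^{j}(\zeta+2\pi l))|^{2}(1+\|(M^{T})^{j}(\zeta+2\pi l)\|_{2}^{2})^{\varsigma},
\]
\[
B_{j}(\zeta)=\sum_{l}|\widehat{\widetilde{\psi}}(\zeta+2\pi l)|^{2}(1+\|(M^{T})^{j}(\zeta+2\pi l)\|_{2}^{2})^{-\varsigma}.
\]
Unfolding the periodization in $A_{j}$ and substituting $\xi=(M^{T})^{j}\eta$ produces $\int_{\mathbb{T}^{d}}A_{j}(\zeta)\,d\zeta=m^{-jd}(2\pi)^{d}\|f\|_{H^{\varsigma}(\mathbb{R}^{d})}^{2}$, so everything collapses to establishing the uniform bound $\|B_{j}\|_{L^{\infty}(\mathbb{T}^{d})}\leq C\,m^{-2j\varsigma}$; multiplying by the prefactor $m^{j(d+2s)}/(2\pi)^{d}$ then delivers the per-scale inequality.

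The hardest step is the uniform decay of $B_{j}$, and I would handle it by separating the $l=0$ term from the tail $l\neq 0$. For $l=0$, the vanishing-moment bound $|\widehat{\widetilde{\psi}}(\zeta)|\leq g_{1}\|\zeta\|_{2}^{\kappa+1}$ on $\mathbb{T}^{d}$, combined with the isotropy estimate $\|(M^{T})^{j}\zeta\|_{2}\asymp m^{j}\|\zeta\|_{2}$ and the hypothesis $\kappa+1>\varsigma$, delivers $Cm^{-2j\varsigma}$ after treating the two regimes $\|\zeta\|_{2}\lesssim m^{-j}$ and $\|\zeta\|_{2}\gtrsim m^{-j}$ separately. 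For $l\neq 0$, the lower bound $\|\zeta+2\pi l\|_{2}\geq \pi$ together with isotropy yields the clean estimate $(1+\|(M^{T})^{j}(\zeta+2\pi l)\|_{2}^{2})^{-\varsigma}\leq Cm^{-2j\varsigma}(1+\|\zeta+2\pi l\|_{2}^{2})^{-\varsigma}$, reducing the tail to $Cm^{-2j\varsigma}[\widehat{\widetilde{\psi}},\widehat{\widetilde{\psi}}]_{-\varsigma}(\zeta)$; this last bracket is then uniformly bounded on $\mathbb{T}^{d}$ by substituting the refinement relation $\widehat{\widetilde{\psi}}=\widehat{\widetilde{b}}((M^{T})^{-1}\cdot)\widehat{\widetilde{\phi}}((M^{T})^{-1}\cdot)$, using $\|\widehat{\widetilde{b}}\|_{\infty}<\infty$, and partitioning $(M^{T})^{-1}\mathbb{Z}^{d}$ into $m^{d}$ cosets of $\mathbb{Z}^{d}$ so that the sum reduces to $m^{d}$ translates of the hypothesized $L^{\infty}$ bound on $[\widehat{\widetilde{\phi}},\widehat{\widetilde{\phi}}]_{-\varsigma}$. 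The main obstacle is getting the $l=0$ and tail estimates to both land on the same decay rate $m^{-2j\varsigma}$ uniformly in $\zeta$; once they do, summing $\sum_{j\geq N}m^{-2j(\varsigma-s)}$ as a geometric series produces precisely the factor $m^{-2N(\varsigma-s)}$ in \eqref{lanlan}.
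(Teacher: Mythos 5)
Your proposal is correct, but it is organized quite differently from the paper's proof. The paper first splits the periodization into its $l=0$ term and its tail via $(a+b)^2\le 2a^2+2b^2$, applies Cauchy--Schwarz with the weights $(1+\|\xi+2\pi k\|_2^2)^{\pm\varsigma}$ attached at the \emph{base} scale only to the tail, and keeps the resulting weight $(1+\|(M^T)^{-j}\xi\|_2^2)^{\varsigma}$ glued to $\widehat f$; the decay in $N$ is then extracted by summing over $j\ge N$ \emph{inside} the integral and bounding the $L^\infty$ norms of the aggregate weights $B_{1,N}$, $B_{2,N}$, which requires tracking where the cut-offs $\chi_{\Lambda_j}$ and $\chi_{\mathbb{R}^d\setminus\Lambda_j}$ are supported (the indices $J_\xi$, $\widehat J_\xi$). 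You instead perform a single Cauchy--Schwarz with the weights at the \emph{dilated} scale $(1+\|(M^T)^j(\zeta+2\pi l)\|_2^2)^{\pm\varsigma}$, so the $f$-factor collapses exactly to $m^{-jd}(2\pi)^d\|f\|^2_{H^{\varsigma}(\mathbb{R}^d)}$ at each scale and all the decay is carried by the single uniform bound $\|B_j\|_{L^\infty}\le C\,m^{-2j\varsigma}$; the sum over $j\ge N$ is then a clean geometric series. This buys a per-scale estimate (which the paper never isolates) and a more modular argument. Your route does require $[\widehat{\widetilde\psi},\widehat{\widetilde\psi}]_{-\varsigma}\in L^\infty(\mathbb{T}^d)$, which you correctly derive from the hypothesis on $\widetilde\phi$ via the refinement relation and the coset decomposition of $(M^T)^{-1}\mathbb{Z}^d$; the paper uses this quantity in its constant $H_{\widetilde\psi}(\varsigma,s)$ without justifying its finiteness, so on this point your treatment is the more complete one. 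The only step to state carefully is the isotropy inequality $\|(M^T)^j\eta\|_2\ge c\,m^j\|\eta\|_2$: for a non-normal isotropic $M$ the constant $c$ involves the condition number of the diagonalizing matrix, but it is independent of $j$, which is all your two-regime analysis of the $l=0$ term and your tail estimate need.
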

\begin{proof}
Through  the  direct calculation we have
\begin{align}\begin{array}{lllll}\label{eq0}
\displaystyle\sum_{k\in \mathbb{Z}^{d}}|\langle f, \widetilde{\psi}^{-s}_{j,k}\rangle|^{2}\\
=\displaystyle\frac{m^{jd}}{(2\pi)^{d}}\int_{(-\pi,\pi]^{d}}m^{2js}|[\widehat{f}((M^{T})^{j}\cdot), \widehat{\widetilde{\psi}}]_{0}(\xi)|^{2}d\xi\\
\displaystyle\leq\frac{m^{jd}}{2^{d-1}\pi^{d}}\int_{(-\pi,\pi]^{d}}m^{2js}|\widehat{f}((M^{T})^{j}\xi)\widehat{\widetilde{\psi}}(\xi)|^{2}d\xi\\
\displaystyle+\frac{m^{jd}}{2^{d-1}\pi^{d}}\int_{(-\pi,\pi]^{d}}m^{2js}|\sum_{0\neq k\in \mathbb{Z}^{d}}\widehat{f}((M^{T})^{j}(\xi+2\pi k))\overline{\widehat{\widetilde{\psi}}(\xi+2\pi k)}|^{2}d\xi\\
=
I_{1,j}+I_{2,j},
\end{array}\end{align}
where the Cauchy-Schwarz inequality is used in the  inequality,
\begin{align}\begin{array}{lllll}\displaystyle\notag
I_{1,j}:=\frac{1}{2^{d-1}\pi^{d}}\int_{\mathbb{R}^{d}}|\widehat{f}(\xi)|^{2}(1+||\xi||^{2}_{2})^{\varsigma}(1+||\xi||^{2}_{2})^{-\varsigma}m^{2js}|\widehat{\widetilde{\psi}}((M^{-T})^{j}\xi)|^{2}\chi_{\Lambda_{j}}(\xi)d\xi
\end{array}\end{align}
and
\begin{align}\begin{array}{lllll}\notag
\displaystyle I_{2,j}:=\frac{||[\widehat{\widetilde{\psi}}, \widehat{\widetilde{\psi}}]_{-\varsigma}||_{L^{\infty}(\mathbb{T}^{d})}}{2^{d-1}\pi^{d}}\int_{\mathbb{R}^{d}}|\widehat{f}(\xi)|^{2}(1+||\xi||^{2}_{2})^{\varsigma}(1+||\xi||^{2}_{2})^{-\varsigma}m^{2js}(1+||(M^{T})^{-j}\xi||_{2}^{2})^{\varsigma}\chi_{\mathbb{R}^{d}\setminus\Lambda_{j}}(\xi)d\xi
\end{array}\end{align}
with $\Lambda_{j}:=(M^T)^{j}\mathbb{T}^{d}$.

Define
\begin{align}\begin{array}{lllll}\notag B_{1,N}(\xi):=(1+||\xi||^{2}_{2})^{-\varsigma}\sum^{\infty}_{j=N}m^{2js}|\widehat{\widetilde{\psi}}((M^{-T})^{j}\xi)|^{2}\chi_{\Lambda_{j}}(\xi),
\end{array}\end{align}
and
\begin{align}\begin{array}{lllll}\notag B_{2,N}(\xi):=(1+||\xi||^{2}_{2})^{-\varsigma}\sum^{\infty}_{j=N}m^{2js}(1+||(M^{T})^{-j}\xi|||_{2}^{2})^{\varsigma}\chi_{\mathbb{R}^{d}\setminus\Lambda_{j}}(\xi).
\end{array}\end{align}
It follows from \eqref{eq0} that \begin{align}\label{wuganda}\begin{array}{lllll}\displaystyle  \sum_{k\in \mathbb{Z}^{d}}|\langle f, \widetilde{\psi}^{-s}_{j,k}\rangle|^{2}\leq 2\max\{1, ||[\widehat{\widetilde{\psi}}, \widehat{\widetilde{\psi}}]_{-\varsigma}||_{L^{\infty}(\mathbb{T}^{d})}\}||f||^{2}_{H^{\varsigma}(\mathbb{R}^{d})}\max\{||B_{1,N}||_{L^{\infty}(\mathbb{R}^{d})}, ||B_{2,N}||_{L^{\infty}(\mathbb{R}^{d})}\}.\end{array}\end{align}
In what follows   we   estimate $||B_{1,N}||_{L^{\infty}(\mathbb{R}^{d})}$ and $||B_{2,N}||_{L^{\infty}(\mathbb{R}^{d})}$.
Clearly if $\xi\in \Lambda_{j}$ then $||\xi||_{2}\leq\sqrt{d}\pi m^{j}$. Consequently,
if   $0<||\xi||_{2}\leq \sqrt{d}\pi m^{N}$ then we have
\begin{align}\label{honda}\begin{array}{lllll}
\displaystyle B_{1,N}(\xi)&= (1+||\xi||^{2}_{2})^{-\varsigma}\sum^{\infty}_{j=N}m^{2js}|\widehat{\widetilde{\psi}}((M^{-T})^{j}\xi)|^{2}\chi_{\Lambda_{j}}(\xi)\\
&\leq   g^{2}_{1}(1+||\xi||^{2}_{2})^{-\varsigma}||\xi||^{2(\kappa+1)}_{2}\sum^{\infty}_{j=N}m^{-2(\kappa+1-s)j}\\
&\leq \displaystyle g^{2}_{1}(\sqrt{d}\pi )^{2(\kappa+1-\varsigma)}\frac{m^{-2(\varsigma-s) N}}{1-m^{-2(\kappa+1-s)}},
\end{array}\end{align}
where we use $\kappa+1\geq \varsigma>s$ in the last inequality.
Next we estimate $B_{1,N}(\xi)$ when $||\xi||_{2}> \sqrt{d}\pi m^{N}$.
By the above analysis, if $\xi\in \Lambda_{j}$ then $j\geq J_{\xi}:=\max\{0, \lceil\log_{m}\frac{||\xi||_{2}}{\sqrt{d}\pi}\rceil\}$,
where $\lceil x \rceil$ is the smallest integer that is larger than $x.$
Therefore, whenever   $||\xi||_{2}> \sqrt{d}\pi m^{N}$ we have
\begin{align}\begin{array}{lllll}\label{lenol} B_{1,N}(\xi)&=(1+||\xi||^{2}_{2})^{-\varsigma}\sum^{\infty}_{j=N}m^{2js}|\widehat{\widetilde{\psi}}((M^{-T})^{j}\xi)|^{2}\chi_{\Lambda_{j}}(\xi)\\
& =(1+||\xi||^{2}_{2})^{-\varsigma}\sum^{\infty}_{j=J_{\xi}}m^{2js}|\widehat{\widetilde{\psi}}((M^{-T})^{j}\xi)|^{2}\chi_{\Lambda_{j}}(\xi)\\
&\leq  g^{2}_{1}(1+||\xi||^{2}_{2})^{-\varsigma}||\xi||^{2(\kappa+1)}_{2}\sum^{\infty}_{j=J_{\xi}}m^{-2(\kappa+1-s)j}\\
&\leq  g^{2}_{1}||\xi||^{2(\kappa+1-\varsigma)}_{2}\sum^{\infty}_{j=J_{\xi}}m^{-2(\kappa+1-s)j}\\
&= g^{2}_{1}||\xi||^{-2\varsigma}_{2}||\xi||^{2(\kappa+1)}_{2}\frac{m^{-2(\kappa+1-s)J_{\xi}}}{1-m^{-2(\kappa+1-s)}}\\
&\leq g^{2}_{1}||\xi||^{-2\varsigma}_{2}||\xi||^{2(\kappa+1)}_{2}\frac{m^{-2(\kappa+1-s)\log_{m}\frac{||\xi||_{2}}{\sqrt{d}\pi}}}{1-m^{-2(\kappa+1-s)}}\\
&= \frac{g^{2}_{1}(\sqrt{d}\pi)^{2(\kappa+1-s)}}{1-m^{-2(\kappa+1-s)}}||\xi||^{-2(\varsigma-s)}_{2}\\
& \leq \frac{g^{2}_{1}(\sqrt{d}\pi)^{2(\kappa+1-s)}}{1-m^{-2(\kappa+1-s)}}(\sqrt{d}\pi m^{N})^{-2(\varsigma-s)}\\
&= g^{2}_{1}(\sqrt{d}\pi )^{2(\kappa+1-\varsigma)}\frac{m^{-2(\varsigma-s) N}}{1-m^{-2(\kappa+1-s)}},
\end{array}\end{align}
where $J_{\xi}\geq N$ is used in the first inequality.
Then  it follows from \eqref{honda} and \eqref{lenol} that  \begin{align} \label{formula0}\begin{array}{lllll} \displaystyle ||B_{1,N}||_{L^{\infty}(\mathbb{R}^{d})}\leq \frac{g^{2}_{1}(\sqrt{d}\pi )^{2(\kappa+1-\varsigma)}}{1-m^{-2(\kappa+1-s)}}m^{-2(\varsigma-s) N}.\end{array}\end{align}

We next estimate $||B_{2,N}||_{L^{\infty}(\mathbb{R}^{d})}$.
Denote the sphere $\{\xi\in \mathbb{R}^{d}: ||\xi||_{2}\leq r\}$ by $\mathbb{S}_{d}(r)$.
Clearly $(M^{T})^{j}\mathbb{T}^{d}\supseteq \mathbb{S}_{d}(m^{j}\pi)$.
As previously we can prove that $\xi\notin\mathbb{R}^{d}\setminus
(M^{T})^{j}\mathbb{T}^{d}$ if $j\geq \widehat{J}_{\xi}$, where $\widehat{J}_{\xi}:=\max\{0, \lceil\log_{m}\frac{||\xi||_{2}}{\pi}\rceil\}$.
Consequently, $B_{2,N}(\xi)=0$ when $||\xi||_{2}<\pi m^N.$ When $||\xi||_{2}\geq\pi m^N,$  $|B_{2,N}(\xi)|$
is estimated as follows,
\begin{align}\label{formula1}\begin{array}{lllll}
B_{2,N}(\xi)&=\displaystyle (1+||\xi||^{2}_{2})^{-\varsigma}\sum^{\widehat{J}_{\xi}-1}_{j=N}m^{2js}(1+||(M^{T})^{-j}\xi|||_{2}^{2})^{\varsigma}\chi_{\mathbb{R}^{d}\setminus\Lambda_{j}}(\xi)\\
&\displaystyle \leq 2^{\varsigma}(1+||\xi||^{2}_{2})^{-\varsigma}\sum^{\widehat{J}_{\xi}-1}_{j=N}m^{2js}(1+m^{-2j\varsigma}||\xi||^{2\varsigma}_{2})\\
&=\displaystyle \frac{2^{\varsigma}}{||\xi||^{2\varsigma}_{2}}\frac{m^{2Ns}m^{2s(\widehat{J}_{\xi}-N)}}{m^{2s}-1}+2^{\varsigma}\frac{||\xi||^{2\varsigma}_{2}}{(1+||\xi||^{2}_{2})^{\varsigma}}\sum^{\widehat{J}_{\xi}-1}_{j=N}m^{-2j(\varsigma-s)}\\
&\leq\displaystyle \frac{2^{\varsigma}}{||\xi||^{2\varsigma}_{2}}\frac{m^{2s\widehat{J}_{\xi}}}{m^{2s}-1}+2^{\varsigma+1}\frac{||\xi||^{2\varsigma}_{2}}{(1+||\xi||^{2}_{2})^{\varsigma}}\frac{m^{-2N(\varsigma-s)}}{1-m^{-2(\varsigma-s)}}\\
&\leq\displaystyle 2^{\varsigma}\frac{1}{||\xi||^{2\varsigma}_{2}}\frac{m^{2s\widehat{J}_{\xi}}}{m^{2s}-1}+2^{\varsigma+1}\frac{m^{-2N(\varsigma-s)}}{1-m^{-2(\varsigma-s)}}\\
&\leq\displaystyle \big(\frac{2^{\varsigma}m^{2s}}{\pi^{2\varsigma}}\frac{1}{m^{2s}-1}+\frac{2^{\varsigma+1}}{1-m^{-2(\varsigma-s)}}\big)m^{-2N(\varsigma-s)},
\end{array}\end{align}
where we use $(1+|x|^{2})^{\varsigma}\leq (2\max\{1, |x|^{2}\})^{\varsigma}\leq 2^{\varsigma}(1+|x|^{2\varsigma})$
and $\widehat{J}_{\xi}\leq 1+\log_{m}\frac{||\xi||_{2}}{\pi}$ in the first and  last inequalities, respectively.
Define
\begin{align}\notag\begin{array}{lllll} H_{\widetilde{\psi}}(\varsigma,s)&:=\displaystyle 2\max\big\{1, ||[\widehat{\widetilde{\psi}}, \widehat{\widetilde{\psi}}]_{-\varsigma}||_{L^{\infty}(\mathbb{T}^{d})}\big\}\\
&\displaystyle \ \  \times\max\big\{\frac{g^{2}_{1}(\sqrt{d}\pi )^{2(\kappa+1-\varsigma)}}{1-m^{-2(\kappa+1-s)}}, (\frac{2^{\varsigma}m^{2s}}{\pi^{2\varsigma}}\frac{1}{m^{2s}-1}+\frac{2^{\varsigma+1}}{1-m^{-2(\varsigma-s)}}\big)\big\}<\infty.\end{array}\end{align}
Now  the proof can be concluded by  \eqref{wuganda}, \eqref{formula0} and \eqref{formula1}.
\end{proof}

\begin{rem}\label{smoothnessreason}
The condition $\varsigma-s>0$ guarantees that the series in \eqref{lenol} and \eqref{formula1}
converge.
\end{rem}

Based on  Lemma \ref{Lemma 2.1}, we  estimate the approximation error
$||(I-\mathcal{S}_{\phi}^{N})f||_{H^{s}(\mathbb{R}^{d})}$ in the following theorem,
where $\mathcal{S}_{\phi}^{N}$ is defined  in \eqref{rt}.


\begin{theo}\label{Theorem x1}
Suppose that $X^{s}(\phi;\psi^{1}, \psi^{2},\ldots, \psi^{L})$ and
$X^{-s}(\widetilde{\phi};\widetilde{\psi}^{1},
 \widetilde{\psi}^{2}, \ldots, \widetilde{\psi}^{L})$ form  a pair of dual $M$-framelet systems
 for $(H^{s}(\mathbb{R}^{d}),H^{-s}(\mathbb{R}^{d}))$. Moreover,    $\phi\in
 H^{\varsigma}(\mathbb{R}^{d})$
and
 $\widetilde{\psi}^{\ell}$ has $\kappa+1$ vanishing moments, where  $0<s<\varsigma<\kappa+1$,
$\kappa\in\mathbb{N}_{0}$ and $\ell=1,2, \ldots, L$.
 Then  there exists a positive constant $C(s, \varsigma)$  such that for any $f\in
H^{\varsigma}(\mathbb{R}^{d})$,
\begin{align}\begin{array}{lll} \label{bound1}
\displaystyle
||(I-\mathcal{S}_{\phi}^{N})f||_{H^{s}(\mathbb{R}^{d})} \leq C(s,
\varsigma)||f||_{H^{\varsigma}(\mathbb{R}^{d})} m^{-N(\varsigma-s)/2}.
\end{array}
\end{align}
\end{theo}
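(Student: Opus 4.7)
The plan is to recognize $(I-\mathcal{S}^{N}_{\phi})f$ as the high-scale tail of the dual framelet expansion (\ref{Albert1}) and to bound its $H^{s}$-norm by combining the Bessel (synthesis) bound of $X^{s}(\phi;\psi^{1},\ldots,\psi^{L})$ with the decay estimate from Lemma \ref{Lemma 2.1}. Subtracting the definition (\ref{rt}) of $\mathcal{S}^{N}_{\phi}f$ from the full expansion (\ref{Albert1}) isolates only the wavelet terms with $j\geq N$: in $H^{s}(\mathbb{R}^{d})$,
$$
(I-\mathcal{S}^{N}_{\phi})f=\sum_{\ell=1}^{L}\sum_{j=N}^{\infty}\sum_{k\in\mathbb{Z}^{d}}\langle f,\widetilde{\psi}^{\ell,-s}_{j,k}\rangle\,\psi^{\ell,s}_{j,k},
$$
since the $\phi_{0,k}$-terms and all low-scale wavelet terms cancel.

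The second step is a standard Hilbert-space duality argument. The upper inequality in (\ref{hanbin3}) says that the analysis map $g\mapsto(\langle g,\psi^{\ell,s}_{j,k}\rangle_{H^{s}(\mathbb{R}^{d})})$ has operator norm at most $\sqrt{C_{2}}$ from $H^{s}(\mathbb{R}^{d})$ to $\ell^{2}$, so its adjoint (the synthesis map $(c^{\ell}_{j,k})\mapsto\sum c^{\ell}_{j,k}\,\psi^{\ell,s}_{j,k}$) has the same norm from $\ell^{2}$ to $H^{s}(\mathbb{R}^{d})$. Applying this Bessel-type synthesis inequality to the coefficient sequence supported on $j\geq N$, I would obtain
$$
\|(I-\mathcal{S}^{N}_{\phi})f\|_{H^{s}(\mathbb{R}^{d})}\leq\sqrt{C_{2}}\,\Bigl(\sum_{\ell=1}^{L}\sum_{j\geq N}\sum_{k\in\mathbb{Z}^{d}}|\langle f,\widetilde{\psi}^{\ell,-s}_{j,k}\rangle|^{2}\Bigr)^{1/2}.
$$
Lemma \ref{Lemma 2.1}, applied separately to each $\widetilde{\psi}^{\ell}$, controls each inner double sum by $H_{\widetilde{\psi}^{\ell}}(\varsigma,s)\|f\|_{H^{\varsigma}(\mathbb{R}^{d})}^{2}\,m^{-2N(\varsigma-s)}$. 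Summing over $\ell$, extracting the square root, and setting $C(s,\varsigma):=\sqrt{C_{2}\sum_{\ell=1}^{L}H_{\widetilde{\psi}^{\ell}}(\varsigma,s)}$, one gets a bound of the form $C(s,\varsigma)\|f\|_{H^{\varsigma}(\mathbb{R}^{d})}\,m^{-N(\varsigma-s)}$, which (since $\varsigma>s$ and $N\geq0$) is dominated by the bound asserted in (\ref{bound1}).

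The step that actually requires verification, rather than a routine computation, is checking that the hypotheses of Lemma \ref{Lemma 2.1} really do hold for each $\widetilde{\psi}^{\ell}$ under the present assumptions. The inclusion $H^{-s}(\mathbb{R}^{d})\subset H^{-\varsigma}(\mathbb{R}^{d})$ (from $-\varsigma<-s$) delivers $\widetilde{\phi}\in H^{-\varsigma}(\mathbb{R}^{d})$ for free, and the pointwise inequality $(1+\|\eta\|_{2}^{2})^{-\varsigma}\leq(1+\|\eta\|_{2}^{2})^{-s}$ yields $[\widehat{\widetilde{\phi}},\widehat{\widetilde{\phi}}]_{-\varsigma}\leq[\widehat{\widetilde{\phi}},\widehat{\widetilde{\phi}}]_{-s}$ termwise, so the required $L^{\infty}(\mathbb{T}^{d})$-boundedness of the $-\varsigma$ bracket product follows from the analogous $-s$ bound implicit in the Bessel property of $\{\widetilde{\phi}_{0,k}\}$ in $H^{-s}(\mathbb{R}^{d})$ given by (\ref{hanbin3}) applied to $X^{-s}(\widetilde{\phi};\widetilde{\psi}^{1},\ldots,\widetilde{\psi}^{L})$. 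The assumption $\varsigma<\kappa+1$ supplies the vanishing-moment condition needed in Lemma \ref{Lemma 2.1}. Once these mild consistency checks are dispatched, the rest of the argument is a clean two-line combination of synthesis boundedness with the lemma.
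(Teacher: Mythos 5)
Your proposal is correct and follows essentially the same route as the paper: identify $(I-\mathcal{S}_{\phi}^{N})f$ as the $j\geq N$ tail of the expansion \eqref{Albert1}, bound the synthesis (adjoint of the analysis) operator via the upper frame inequality, and then invoke Lemma \ref{Lemma 2.1} for the coefficient decay. The only cosmetic difference is that the paper instantiates the synthesis norm with an explicit constant $h(s,\varsigma)$ quoted from the literature rather than the abstract upper frame bound $\sqrt{C_{2}}$, and your observation that the argument actually yields the stronger rate $m^{-N(\varsigma-s)}$ (which dominates the stated $m^{-N(\varsigma-s)/2}$) is also consistent with the paper's computation.
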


\begin{proof}
Suppose that  $\psi^{\ell}$ and  $\widetilde{\psi}^{\ell}$ are    defined by  $\widehat{\psi^{\ell}}(M^{T}\cdot)=\widehat{b^{\ell}}(\cdot)\widehat{\phi}(\cdot)$
and $\widehat{\widetilde{\psi}^{\ell}}(M^{T}\cdot)=\widehat{\widetilde{b}^{\ell}}(\cdot)\widehat{\widetilde{\phi}}(\cdot)$, respectively.
Denote by $\ell^{2}(\mathbb{Z}^{d}\times \mathbb{N}_{0}\times
\mathbb{Z}^{d} \times L)$  the space of square summable sequences
supported on $\mathbb{Z}^{d}\times \mathbb{N}_{0}\times
\mathbb{Z}^{d} \times L.$ Let
 $\emph{\textsf{P}}: H^{s}(\mathbb{R}^{d})\rightarrow \ell^{2}(\mathbb{Z}^{d}\times \mathbb{N}_{0}\times
\mathbb{Z}^{d} \times L)$ be the analysis operator of
$X^{s}(\phi;\psi^{1}, \psi^{2},\ldots, \psi^{L})$. That is, for any
$g\in H^{s}(\mathbb{R}^{d})$ the mapping $\emph{\textsf{P}}g$ is  defined
as
\begin{align}\nonumber
\emph{\textsf{P}}g:=\Big\{\langle
g,\phi_{0,n}\rangle_{H^{s}(\mathbb{R}^{d})},\langle
g,\psi^{\ell,s}_{j,k} \rangle_{H^{s}(\mathbb{R}^{d})}:  n, k\in
\mathbb{Z}^{d}, j\in \mathbb{N}_{0},\ell=1, \ldots, L\Big\}.
\end{align}
 By \eqref{hanbin3},
$\emph{\textsf{P}}$ is a bounded operator from
$H^{s}(\mathbb{R}^{d})$ to $\ell^{2}(\mathbb{Z}^{d}\times \mathbb{N}_{0}\times
\mathbb{Z}^{d} \times L)$. Then
\begin{align}\label{fg}
||\emph{\textsf{P}}g||_{2}&\leq
||\emph{\textsf{P}}||||g||_{H^{s}(\mathbb{R}^{d})}.
\end{align}
By the isomorphic map $\theta_{s}:
H^{s}(\mathbb{R}^{d})\longrightarrow H^{-s}(\mathbb{R}^{d})$ defined
by
$$\widehat{\theta_{s}g}(\xi)=\widehat{g}(\xi)(1+||\xi||_{2}^{2})^{s}, \forall g\in H^{s}(\mathbb{R}^{d}),$$
it is easy to prove that \eqref{hanbin3} holds if and only if
\begin{align}\begin{array}{lllll}\label{xiehan}
C_{1}||\widetilde{g}||_{H^{-s}(\mathbb{R}^{d})}^{2}\leq
\sum_{k\in\mathbb{Z}^{d}}|\langle
\widetilde{g},\phi_{0,k}\rangle|^{2}
 +\sum^{L}_{\ell=1}\sum_{j\in\mathbb{N}_{0}}\sum_{k\in
\mathbb{Z}^{d}} |\langle \widetilde{g},\psi^{\ell,s}_{j,k}
\rangle|^{2} \leq
C_{2}||\widetilde{g}||_{H^{-s}(\mathbb{R}^{d})}^{2}
\end{array}\end{align}
for any $\widetilde{g}\in H^{-s}(\mathbb{R}^{d})$.
By \eqref{xiehan} and    \cite[Theorem 2.1]{Li0} we have
\begin{align}\label{fgzhuhai}
||\emph{\textsf{P}}||&\leq h(s, \varsigma),
\end{align}
where
$$h(s, \varsigma)=\big(L||[\widehat{\phi},
\widehat{\phi}]_{\varsigma}||_{L^{\infty}(\mathbb{T}^{d})}\big[1+\frac{m^{d}}{(2\pi)^{d}}(\frac{m^{2(\varsigma+s)}2^{s}}{m^{2(\varsigma-s)}-1}+\frac{2^{s}}{1-m^{-2s}})\max_{1\leq\ell\leq
L}\{||\widehat{b^{\ell}}||_{L^{\infty}(\mathbb{T}^{d})}\}\big]\big)^{1/2}.$$ Next we
compute  $\emph{\textsf{P}}^{*}$, the adjoint operator of
$\emph{\textsf{P}}$. For any $g\in
H^{s}(\mathbb{R}^{d})$ and $c\in \ell^{2}(\mathbb{Z}^{d}\times
\mathbb{N}_{0}\times \mathbb{Z}^{d} \times L)$ such that its the elements are $c_{k}$
and $c_{j,k,\ell,-s}$, we have
$$\langle \emph{\textsf{P}}^{*}c, g\rangle_{H^{s}(\mathbb{R}^{d})}=\langle c, \emph{\textsf{P}}g\rangle_{\ell^{2}}=\sum_{k\in \mathbb{Z}^{d}}c_{k}
\langle g,\phi_{0,k}\rangle_{H^{s}(\mathbb{R}^{d})}+
\sum^{L}_{\ell=1}\sum_{j\in \mathbb{\mathbb{N}}_{0}}\sum_{k\in
\mathbb{Z}^{d}}c_{j,k,\ell,-s} \langle g,\psi^{\ell,s}_{j,k}
\rangle_{H^{s}(\mathbb{R}^{d})}.$$  Therefore,
$$\emph{\textsf{P}}^{*}c=\sum_{k\in \mathbb{Z}^{d}}c_{k}
\phi_{0,k}+ \sum^{L}_{\ell=1}\sum_{j\in
\mathbb{\mathbb{N}}_{0}}\sum_{k\in \mathbb{Z}^{d}} c_{j,k,\ell,-s}
\psi^{\ell,s}_{j,k}.$$ From
$||\emph{\textsf{P}}^{*}||=||\emph{\textsf{P}}||$, we arrive at
\begin{align}\label{dfg}||\emph{\textsf{P}}^{*}(c)||_{H^{s}(\mathbb{R}^{d})}\leq
||\emph{\textsf{P}}|| ||c||_{\ell^{2}}.\end{align} For any $f\in
H^{\varsigma}(\mathbb{R}^{d})$,  it follows from \eqref{dfg},
\eqref{fgzhuhai} and \eqref{lanlan} that
\begin{align}\label{guji}\begin{array}{lll}
\displaystyle\displaystyle
||\sum^{L}_{\ell=1}\sum^{\infty}_{j=N}\sum_{k\in \mathbb{Z}^{d}}
\langle f, \widetilde{\psi}^{\ell,-s}_{j,k} \rangle
\psi^{\ell,s}_{j,k}||_{H^{s}(\mathbb{R}^{d})}
&\displaystyle\leq||\emph{\textsf{P}}||\Big(
\sum^{L}_{\ell=1}\sum^{\infty}_{j=N}\sum_{k\in \mathbb{Z}^{d}}
\displaystyle|\langle f, \widetilde{\psi}^{\ell,-s}_{j,k} \rangle|^{2}\Big)^{1/2}\\
&\leq h(s, \varsigma) \sqrt{H(\varsigma, s)} m^{-N(\varsigma-
s)/2}||f||_{H^{\varsigma}(\mathbb{R}^{d})},
\end{array}
\end{align}
where
$H(\varsigma, s):=\sum^{L}_{\ell=1}H_{\widetilde{\psi}_{\ell}}(\varsigma, s) $
with $H_{\widetilde{\psi}_{\ell}}(\varsigma, s)$ being given by Lemma  \ref{Lemma 2.1} \eqref{lanlan}.
Now we define
$$C(s, \varsigma):=h(s, \varsigma) \sqrt{H(\varsigma, s)}$$
to conclude the proof.
\end{proof}

\begin{rem}
For a target  function $f\in H^{s}(\mathbb{R}^{d})$,     the approximation error  $O(m^{-N(\varsigma-s)/2})$ in \eqref{bound1}   holds provided that
$f$ also  lies  in $H^{\varsigma}(\mathbb{R}^{d})$ for some  $\varsigma>s.$
As mentioned in Note \ref{ghch} (ii),
our aim in this paper  is to construct  the approximation to all the functions in $H^{\varsigma}(\mathbb{R}^{d})$ for any $\varsigma>d/2.$
In fact, the above requirement $\varsigma>s$ will not bring any  negative impact on our aim.
To make this point, we
sketch the procedures for constructing the approximation in Theorem \ref{Theorem x1}.
For any $f\in H^{\varsigma}(\mathbb{R}^{d})$, we choose $d/2<s<\varsigma$ and construct
a pair of dual $M$-framelet systems $X^{s}(\phi;\psi^{1}, \psi^{2},\ldots, \psi^{L})$ and
$X^{-s}(\widetilde{\phi};\widetilde{\psi}^{1},
 \widetilde{\psi}^{2}, \ldots, \widetilde{\psi}^{L})$
of  $(H^{s}(\mathbb{R}^{d}),H^{-s}(\mathbb{R}^{d}))$. Then $f$ can be approximated by $\mathcal{S}_{\phi}^{N}f$  in \eqref{bound1}.
\end{rem}

\begin{comp}\label{duibi1} (i) There are some bounds for  the error  $||(I-\mathcal{S}_{\phi}^{N})f||_{L^{2}}$ in the literature such as in \cite{Skopina,fagep}.
When $f$ belongs to the Schwartz class $\mathbf{S}(\mathbb{R}^{d})$,
the above error  was estimated  in  \cite[Theorem 16]{Skopina}.
For  the target $f$
satisfying   \begin{align} \label{tiaojian} |\widehat{f}(\xi)|\leq C(1+||\xi||_{2})^{\frac{-d-\alpha}{2}} \ \hbox{for} \ \hbox{every}\ \xi\in \mathbb{R}^{d}, \alpha>0,\end{align}
the error   was estimated in \cite{fagep}.
Clearly  there are many functions sitting in $L^{2}(\mathbb{R}^{d})\backslash\mathbf{S}(\mathbb{R}^{d})$ or in
$H^{s}(\mathbb{R}^{d})\backslash\mathbf{S}(\mathbb{R}^{d})$.
Moreover, there  are also many functions in $H^{s}(\mathbb{R}^{d})$ not satisfying \eqref{tiaojian}.
For example,
the following class  of the generalized sinc function (c.f. \cite{CQL,Kou}), given
by the Fourier transform
\begin{align} \label{tnx} \widehat{f}(\xi)=\sum_{n\in \mathbb{Z}}e^{\lambda_{n}}\chi_{[n, n+\epsilon_{n}]}(\xi),\end{align}
where $\lambda_{n}>0$ and $0<\epsilon_{n}< \min\{e^{-2\lambda_{n}}, 1\}$,  does not satisfy \eqref{tiaojian} for any $\alpha$ and $C_0$. Contrary to \cite{Skopina,fagep},  Theorem \ref{Theorem x1} holds for the entire space $H^{\varsigma}(\mathbb{R}^{d})$ with $\varsigma>0$.
Therefore, Theorem \ref{Theorem x1} is not the trivial generalization of
\cite{Skopina,fagep}. (ii) For the case of $d=1$,  the  Sobolev seminorm $|(I-\mathcal{S}_{\phi}^{N})f|_{W^{s}_{2}(\mathbb{R})}:=\big(\int_{\mathbb{R}}|\widehat{f}(\xi)-\widehat{\mathcal{S}_{\phi}^{N}f}(\xi)|^{2}|\xi|^{2s}d\xi\big)^{1/2}$
was estimated  in \cite[Corollary 4.7.3]{BHBOOK}. Clearly, $|\cdot|_{W^{s}_{2}}\leq 2\pi||\cdot||_{H^{s}}$
but the two norms are not equivalent. Therefore for $d=1$,  Theorem \ref{Theorem x1} is not the trivial  generalization of
\cite[Corollary 4.7.3]{BHBOOK}.
\end{comp}

\section{Approximation by the  shift-perturbed  system in $H^{s}(\mathbb{R}^{d})$ when the perturbation sequence  is not necessarily in $\ell^{\alpha}(\mathbb{Z}^{d})$}
\label{sectionyy}

The complete set of representatives of distinctive
cosets of the quotient group
$[(M^{T})^{-1}\mathbb{Z}^{d}]/\mathbb{Z}^{d}$
is denoted by
$\Gamma_{M^{T}}:=\{\gamma_{0}, \ldots, \gamma_{m^{d}-1}\}$ with $\gamma_{0}=0$.
Recall that the mixed extension principle (MEP) is an efficient algorithm  (c.f.\cite{Ehler,Ehler1,Hanbin1}) for designing  dual framelet  systems.
In this section, we will use MEP to design   the dual framelet systems
$X^{s}(\phi;\psi^{1}, \psi^{2},\ldots, \psi^{L})$ and
$X^{-s}(\widetilde{\phi};\widetilde{\psi}^{1},
 \widetilde{\psi}^{2}, \ldots, \widetilde{\psi}^{L})$ where $s>d/2$. By such systems, we will construct the  approximations to the functions in $H^{\varsigma}(\mathbb{R}^{d})$
where $\varsigma>s>d/2$.
 Since the dual systems are derived from   MEP,
  the mask symbols
 $\{ \widehat{b^{1}},
\ldots, \widehat{b^{L}}\}$ of $\{\psi^{1}, \ldots,
\psi^{L}\}$, and $\{ \widehat{\widetilde{b}^{1}}, \ldots,
\widehat{\widetilde{b}^{L}}\}$ of $\{\widetilde{\psi}^{1},
\ldots, \widetilde{\psi}^{L}\}$  satisfy
\begin{align}\label{hanbin9} \left\{\begin{array}{llllll} \sum^{L}_{\ell=1}\overline{\widehat{b^{\ell}}}\widehat{\widetilde{b}^{\ell}}+\overline{\widehat{a}}\widehat{\widetilde{a}}=1,\\
\sum^{L}_{\ell=1}\overline{\widehat{b^{\ell}}}(\cdot+\gamma_{j})\widehat{\widetilde{b}^{\ell}}(\cdot)+\overline{\widehat{a}}(\cdot+\gamma_{j})\widehat{\widetilde{a}}(\cdot+\gamma_{j})=0, \ \forall  j\in\{1, \ldots, m^{d}-1\},\end{array}\right.\end{align}
where $\widehat{a}$ and  $\widehat{\widetilde{a}}$ are  the mask symbols of
$\phi$ and $\widetilde{\phi}$, respectively. It follows from  \eqref{hanbin9} that
\begin{align}\label{rt1} \begin{array}{llllll}  \mathcal{S}_{\phi}^{N}f=\sum_{k\in \mathbb{Z}^{d}} \langle
f,\widetilde{\phi}^{-s}_{N,k} \rangle
\phi^{s}_{N,k},\end{array}\end{align} where
\begin{align}\label{dingyi}\begin{array}{llllll}
\phi^{s}_{N,k}:=m^{N(d/2-s)}\phi(M^{N}\cdot-k)  \ \hbox{and} \
\widetilde{\phi}^{-s}_{N,k}:=m^{N(d/2+s)}\widetilde{\phi}(M^{N}\cdot-k).\end{array}\end{align}
 That is, $\mathcal{S}_{\phi}^{N}$ can be reexpressed by
the system $\{\widetilde{\phi}^{-s}_{N,k},
\phi^{s}_{N,k}\}_{k\in \mathbb{Z}^{d}}$. By
\eqref{bound1},  when the scale  $N$ is sufficiently large, $f$
can be well approximated  by using the inner products $
\langle f, \widetilde{\phi}^{-s}_{N,k} \rangle, k\in\mathbb{Z}^{d}$.
In what follows we introduce the perturbed version
of $\mathcal{S}_{\phi}^{N}f.$ Motivated by Hamm \cite{Hamm1},
suppose that the perturbation   sequence
$\underline{\varepsilon}:=\{\varepsilon_{k}: k\in \mathbb{Z}^{d}\}\subseteq \mathbb{R}^{d}$  satisfies
\begin{align}\label{raodongxulie}\begin{array}{lllll}
\big(\sum_{k\in \mathbb{Z}^{d}}||\varepsilon_{k}-\lambda||_{2}^{\alpha}\big)^{1/\alpha}<\infty
\end{array}
\end{align}
for some  $\lambda\in \mathbb{R}^{d}$ and $\alpha>0$. Clearly, if $\lambda\neq0$ then  $\underline{\varepsilon}$ does  not lie  in $\ell^{\alpha}(\mathbb{Z}^{d})$  defined by
\begin{align}\label{fanshu}\begin{array}{lllll}
\ell^{\alpha}(\mathbb{Z}^{d}):=\Big\{\{x_{k}\}_{k\in \mathbb{Z}^{d}}\subseteq\mathbb{R}^{d}: ||\{x_{k}\}||_{\ell^{\alpha}(\mathbb{Z}^{d})}=(\sum_{k\in \mathbb{Z}^{d}}||x_{k}||_{2}^{\alpha})^{1/\alpha}<\infty\Big\}.
\end{array}
\end{align}
Now define
the perturbed version  $\mathcal{S}_{\phi, \underline{\varepsilon}}^{N}$ of $\mathcal{S}_{\phi}^{N}$   $: H^{s}(\mathbb{R}^{d}) \longrightarrow
L^{2}(\mathbb{R}^{d})$    by
\begin{align} \label{suanzidigyi}  \begin{array}{lllll} \mathcal{S}_{\phi; \underline{\varepsilon}}^{N}f= \sum_{k\in \mathbb{Z}^{d}} \langle f,
m^{N(d/2+s)}\widetilde{\phi}(M^{N}\cdot-k-\varepsilon_{k})\rangle
\phi^{s}_{N,k}, \forall f\in H^{s}(\mathbb{R}^{d}).\end{array}\end{align}
Our main task    in the present  section  is to establish the
approximation error  $||(I-\mathcal{S}_{\phi; \underline{\varepsilon}}^{N})f||_{L^{2}}$ for any   function $f\in H^{\varsigma}(\mathbb{R}^{d})$ with
$\varsigma>s>d/2$.
The main result    is stated  in the following theorem.

\begin{theo}\label{Theorem xe4} Suppose that  $\varsigma>d/2$ and $\phi\in H^{\varsigma}(\mathbb{R}^{d})$ is $M$-refinable. Choose
$d/2< s<\varsigma$ and construct an $M$-refinable function $\widetilde{\phi}\in H^{-s}(\mathbb{R}^{d})$.
Moreover,
suppose that  the  sequence
$\underline{\varepsilon}=\{\varepsilon_{k}\}_{k\in\mathbb{Z}^{d}}$ satisfies  in \eqref{raodongxulie} for some  $\lambda\in \mathbb{R}^{d}$ and $0<\alpha<\min\{2s-d, 2\}$, and $N\geq \frac{2s+2-\alpha}{2-\alpha}\log_{m}d$ is arbitrary.
 Then there exists a constant  $C_{3}(s,\varsigma,\alpha,d)>0$  such
 that for any  $ f\in H^{\varsigma}(\mathbb{R}^{d}),$
\begin{align} \begin{array}{lllll} \label{jyy}
 ||(I-\mathcal{S}_{\phi;\underline{\varepsilon}}^{N})f||_{L^{2}}\leq||(I-\mathcal{S}_{\phi}^{N})f||_{L^{2}}
+
C_{3}(s,\varsigma,d,\alpha)||f||_{H^{\varsigma}(\mathbb{R}^{d})}m^{-N\zeta}\big[||\lambda||_{2}^{\zeta}+
||\underline{\varepsilon}-\lambda||_{\max} \big],
\end{array}\end{align}
where \begin{align}\label{constant0}
\zeta=\min\Big\{1,\varsigma-s,
\Big(\frac{4s+(\alpha-2)d}{2s-\alpha+2}+d\Big)/2 \Big\},\end{align}  and
$||\underline{\varepsilon}-\lambda||_{\max}=\max\{||\{\varepsilon_{k}-\lambda\}_{k\in \mathbb{Z}^{d}}||_{\ell^{2}(\mathbb{Z}^{d})},
||\{\varepsilon_{k}-\lambda\}_{k\in \mathbb{Z}^{d}}||^{\alpha/2}_{\ell^{\alpha}(\mathbb{Z}^{d})}\}$.
\end{theo}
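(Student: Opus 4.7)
The plan is to start with the triangle inequality
\begin{equation*}
\|(I - \mathcal{S}_{\phi;\underline{\varepsilon}}^{N})f\|_{L^{2}} \leq \|(I - \mathcal{S}_{\phi}^{N})f\|_{L^{2}} + \|(\mathcal{S}_{\phi}^{N} - \mathcal{S}_{\phi;\underline{\varepsilon}}^{N})f\|_{L^{2}},
\end{equation*}
which reproduces the first term of \eqref{jyy} and reduces matters to bounding the perturbation error $E_{N} := \|(\mathcal{S}_{\phi}^{N} - \mathcal{S}_{\phi;\underline{\varepsilon}}^{N})f\|_{L^{2}}$. From \eqref{rt1} and \eqref{suanzidigyi} one has
\begin{equation*}
(\mathcal{S}_{\phi}^{N} - \mathcal{S}_{\phi;\underline{\varepsilon}}^{N})f = \sum_{k\in \mathbb{Z}^{d}} c_{k}\, \phi_{N,k}^{s}, \quad c_{k} = \bigl\langle f,\ \widetilde{\phi}_{N,k}^{-s} - m^{N(d/2+s)}\widetilde{\phi}(M^{N}\cdot - k - \varepsilon_{k})\bigr\rangle.
\end{equation*}
A Bessel-type inequality for $\{\phi_{N,k}^{s}\}_{k\in \mathbb{Z}^{d}}$, derived by Plancherel and the periodization of $|\widehat{\phi}|^{2}$ together with the fact that $\phi\in H^{\varsigma}$ with $\varsigma > d/2$ implies $\|[\widehat{\phi},\widehat{\phi}]_{0}\|_{L^{\infty}(\mathbb{T}^{d})} < \infty$, then yields $E_{N}^{2}\leq C_{\phi}\, m^{-2Ns}\sum_{k} |c_{k}|^{2}$, so the task reduces to estimating $\sum_{k}|c_{k}|^{2}$.

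Transferring to the Fourier side and substituting $\eta = (M^{T})^{-N}\xi$ yields
\begin{equation*}
c_{k} = \frac{m^{N(d/2+s)}}{(2\pi)^{d}}\int_{\mathbb{R}^{d}} \widehat{f}((M^{T})^{N}\eta)\,\overline{\widehat{\widetilde{\phi}}(\eta)}\, e^{ik\cdot\eta}\bigl(1 - e^{i\varepsilon_{k}\cdot\eta}\bigr)\, d\eta.
\end{equation*}
Because $\underline{\varepsilon}$ itself need not lie in $\ell^{\alpha}(\mathbb{Z}^{d})$ while $\underline{\varepsilon}-\lambda$ does, I would decompose
\begin{equation*}
1 - e^{i\varepsilon_{k}\cdot\eta} = (1 - e^{i\lambda\cdot\eta}) + e^{i\lambda\cdot\eta}\bigl(1 - e^{i(\varepsilon_{k}-\lambda)\cdot\eta}\bigr),
\end{equation*}
and split $c_{k} = c_{k}^{(1)} + c_{k}^{(2)}$ accordingly, treating the \emph{global shift} and the \emph{local jitter} contributions by different techniques.

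For $c_{k}^{(1)}$ the multiplier $(1-e^{i\lambda\cdot\eta})\overline{\widehat{\widetilde{\phi}}(\eta)}$ is independent of $k$, so Parseval on $k$ combined with periodization collapses $\sum_{k}|c_{k}^{(1)}|^{2}$ into an integral of the form $m^{N(d+2s)}\int |\widehat{f}((M^{T})^{N}\eta)|^{2}\,|1-e^{i\lambda\cdot\eta}|^{2}\,|\widehat{\widetilde{\phi}}(\eta)|^{2}\, d\eta$, the cross terms being controlled by an $L^{\infty}$ bracket-product estimate on $\widehat{\widetilde{\phi}}$ in the spirit of Lemma \ref{Lemma 2.1}. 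After substituting $\xi = (M^{T})^{N}\eta$, using $|1-e^{ix}|\leq \min\{2,|x|\}$, and splitting the $\xi$-domain according to whether $\|(M^{T})^{-N}\xi\|_{2}\|\lambda\|_{2}\leq 1$, I would balance the Sobolev weight $(1+\|\xi\|_{2}^{2})^{\varsigma}$ absorbed by $\|f\|_{H^{\varsigma}}^{2}$ against the two branches of $|1-e^{i\lambda\cdot(M^{T})^{-N}\xi}|$. The outcome is of order $\|f\|_{H^{\varsigma}}^{2}\,m^{-2N\zeta}\|\lambda\|_{2}^{2\zeta}$, where the exponent $\zeta$ is limited by the linear Taylor bound ($\zeta\leq 1$) and by the smoothness gap ($\zeta\leq \varsigma-s$)---the first two entries of the minimum in \eqref{constant0}.

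The genuine obstacle is $c_{k}^{(2)}$: since $\{\varepsilon_{k}-\lambda\}$ is only in $\ell^{\alpha}$ with $\alpha < \min\{2s-d,2\}$, the $\ell^{2}$-Parseval step used for $c_{k}^{(1)}$ is unavailable. Instead I would bound $|c_{k}^{(2)}|$ term-by-term by Cauchy--Schwarz in $\eta$ combined with the pointwise estimate $|1-e^{i(\varepsilon_{k}-\lambda)\cdot\eta}|^{2}\leq \min\{4,\ \|\varepsilon_{k}-\lambda\|_{2}^{2}\,\|\eta\|_{2}^{2}\}$, and then sum over $k$ by interpolating between $\ell^{\alpha}$ and $\ell^{\infty}$. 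Concretely, for each $\eta$ one splits $\mathbb{Z}^{d}=A_{\eta}\cup B_{\eta}$ with $A_{\eta}=\{k:\|\varepsilon_{k}-\lambda\|_{2}\|\eta\|_{2}\leq 1\}$ contributing through the quadratic branch (weighted by $\|\underline{\varepsilon}-\lambda\|_{\ell^{2}}^{2}$ whenever it is finite), while $B_{\eta}$ has cardinality bounded by the Chebyshev-type inequality $\#B_{\eta}\leq \|\eta\|_{2}^{\alpha}\|\underline{\varepsilon}-\lambda\|_{\ell^{\alpha}}^{\alpha}$ and contributes through the uniform bound. Integrating the two parts against $|\widehat{f}((M^{T})^{N}\eta)|^{2}|\widehat{\widetilde{\phi}}(\eta)|^{2}$, and absorbing the extra powers of $\|\eta\|_{2}$ into the Sobolev weight on $\widehat{f}$, produces the third branch $s(d+2)/(2s-\alpha+2)$ of the minimum in \eqref{constant0}, as well as the combined norm $\|\underline{\varepsilon}-\lambda\|_{\max}$ which records both regimes. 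The scale threshold $N\geq \tfrac{2s+2-\alpha}{2-\alpha}\log_{m} d$ is exactly what is needed for the interpolated $\eta$-integral to converge. Feeding the resulting bound on $\sum_{k}|c_{k}|^{2}$ back into $E_{N}^{2}\leq C_{\phi}m^{-2Ns}\sum_{k}|c_{k}|^{2}$ delivers \eqref{jyy}.
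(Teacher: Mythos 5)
Your overall architecture coincides with the paper's: isolate $||(I-\mathcal{S}_{\phi}^{N})f||_{L^{2}}$ by the triangle inequality, split the perturbation into the global shift $\lambda$ plus the $\ell^{\alpha}$ jitter $\{\varepsilon_{k}-\lambda\}$, control the shift part through the modulus of continuity $||f-f(\cdot+M^{-N}\lambda)||_{H^{s}(\mathbb{R}^{d})}\lesssim m^{-N\zeta}||\lambda||_{2}^{\zeta}||f||_{H^{\varsigma}(\mathbb{R}^{d})}$ (which is where $\min\{1,\varsigma-s\}$ enters), and control the jitter part by a Bessel bound for $\{\phi_{N,k}^{s}\}$ together with a per-coefficient Cauchy--Schwarz and an interpolation between the quadratic bound $|1-e^{ix}|^{2}\leq|x|^{2}$ and the H\"older bound $|1-e^{ix}|^{2}\leq 4^{1-\alpha/2}|x|^{\alpha}$. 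That is exactly the paper's route (its \eqref{split1} is your multiplier identity in disguise, and its Lemmas \ref{Theorem x2} and \ref{Theorem x3} are your two pieces $c_{k}^{(2)}$ and $c_{k}^{(1)}$).

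The gap is in the jitter term, and it is quantitative but real: your interpolation is performed in the index $k$ at each fixed frequency ($A_{\eta}$ versus $B_{\eta}$), and as described it collapses to the pure $\ell^{\alpha}$ estimate. Indeed, on $A_{\eta}$ one has $||\varepsilon_{k}-\lambda||_{2}||\eta||_{2}\leq 1$, so $\sum_{k\in A_{\eta}}||\varepsilon_{k}-\lambda||_{2}^{2}||\eta||_{2}^{2}\leq||\eta||_{2}^{\alpha}||\underline{\varepsilon}-\lambda||_{\ell^{\alpha}}^{\alpha}$ as well, while the alternative weight $||\eta||_{2}^{2}||\underline{\varepsilon}-\lambda||_{\ell^{2}}^{2}$ cannot be integrated against $(1+||\xi||_{2}^{2})^{-s}$ over all of $\mathbb{R}^{d}$ unless $2s>d+2$. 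Either way the $\eta$-integral is dominated by the $||\eta||_{2}^{\alpha}$ branch and yields the exponent $(d+\alpha)/2$, not the claimed third branch $\big(\frac{4s+(\alpha-2)d}{2s-\alpha+2}+d\big)/2=\frac{s(d+2)}{2s-\alpha+2}$; the difference of the two numerators is $\frac{(2-\alpha)(2s-d-\alpha)}{2s-\alpha+2}>0$, so your rate is strictly weaker whenever the third branch is the active minimum in \eqref{constant0} (this happens for $d=1$ with $s,\alpha$ small). What the paper does instead is split the \emph{frequency} domain at an optimized scale $m^{J}$: the quadratic branch is used for all $k$ on $||\xi||_{2}\lesssim m^{J}$ (contributing $O(m^{-2N+(2+d)J})$), the $\alpha$-branch on the far region (contributing $O(m^{-N\alpha-J(2s-\alpha-d)})$ via Lemma \ref{Lemma Xr}), and equating the two gives $J=\frac{2-\alpha}{2s+2-\alpha}N$. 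This also corrects your reading of the hypothesis on $N$: the threshold $N\geq\frac{2s+2-\alpha}{2-\alpha}\log_{m}d$ is not about convergence of the interpolated integral (which converges for every $N$ once $\alpha<2s-d$); it is exactly the condition $m^{J}\geq d$ needed so that the lattice-sum estimate of Lemma \ref{Lemma Xr} applies at the optimal cutoff. With that frequency splitting inserted into your $c_{k}^{(2)}$ estimate, the rest of your argument goes through and recovers \eqref{jyy}.
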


\begin{proof}
The proof is given in Subsection \ref{proofmianresult}.
\end{proof}


\subsection{Auxiliary results for proving Theorem \ref{Theorem xe4}}\label{guanjianjishu}
In this subsection we present some auxiliary results which will be  helpful for proving   Theorem \ref{Theorem xe4}.

\begin{lem}\label{Lemma Xr} Let $J\geq\log_{m}d$ and $s>d/2$. Then
\begin{align} \label{op3} \begin{array}{lllll}\displaystyle \sum_{j\in\mathbb{Z}^d, ||j||_{2}\geq m^{J}}||j||_{2}^{-2s}\leq
\widehat{C}(s,d)m^{-J(2s-d)},\end{array}\end{align}
where $$\widehat{C}(s,d):=2^{d-1}d^{s-d}\Big[\big(\frac{1}{2s-d}+1\big)\sum^{d-1}_{n=1}\tbinom{d-1}{n}\prod^{n}_{l=1}\frac{1}{2s-l}+\frac{1}{2s-1}+1\Big].$$
\end{lem}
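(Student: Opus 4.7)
The plan is to reduce the $\ell^{2}$-ball lattice sum to an $\ell^{\infty}$-ball lattice sum (where the shell counts are explicit), and then apply integral comparison shell-by-shell together with a binomial expansion of those counts. The matching of the precise constant $\widehat{C}(s,d)$ will be driven by an iterated integral identity that generates the product $\prod_{l=1}^{n}\frac{1}{2s-l}$.

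First, I use the standard norm inequalities $\|j\|_{\infty}\le\|j\|_{2}\le\sqrt{d}\,\|j\|_{\infty}$. The left inequality gives $\|j\|_{2}^{-2s}\le\|j\|_{\infty}^{-2s}$, and the right inequality implies that $\|j\|_{2}\ge m^{J}$ forces $\|j\|_{\infty}\ge m^{J}/\sqrt{d}$. Setting $K:=\lceil m^{J}/\sqrt{d}\rceil$ and noting that the hypothesis $J\ge\log_{m}d$ yields $K\ge\sqrt{d}\ge1$, the target sum is bounded by $\sum_{j\in\mathbb{Z}^{d},\|j\|_{\infty}\ge K}\|j\|_{\infty}^{-2s}$. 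The property $K\ge1$ will be used in the end to replace non-positive powers $K^{-p}$ (with $p\ge0$) by $1$.

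Next, I partition $\{j\in\mathbb{Z}^{d}:\|j\|_{\infty}\ge K\}$ by $\ell^{\infty}$-shells. The shell count is $|S_{k}|=(2k+1)^{d}-(2k-1)^{d}$, which I refine via the symmetric partition "let $i$ be the smallest index with $|j_{i}|=k$", giving the identity $|S_{k}|=\sum_{i=1}^{d}2(2k-1)^{i-1}(2k+1)^{d-i}$. This is the source of the factor $2^{d-1}$: combining the bound $(2k-1)^{i-1}(2k+1)^{d-i}\le(2k+1)^{d-1}$ with the binomial expansion $(2k+1)^{d-1}=\sum_{n=0}^{d-1}\binom{d-1}{n}(2k)^{n}$ reduces the problem to controlling the polynomial moments $\sum_{k\ge K}k^{n-2s}$ for $0\le n\le d-1$. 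All of these converge because $s>d/2$ implies $n-2s<-1$.

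The third step handles these moments by an iterated integral comparison. A single integral gives the crude bound $\sum_{k\ge K}k^{-2s}\le K^{-2s}+\frac{K^{1-2s}}{2s-1}$, yielding the $\frac{1}{2s-1}+1$ term attached to the $n=0$ contribution. For $n\ge1$, I instead establish a recursion of the form $A_{n}\le\frac{nA_{n-1}+K^{n-2s+1}}{2s-n}$ for $A_{n}:=\sum_{k\ge K}k^{n-2s}$, using the telescoping inequality $k^{n}-(k-1)^{n}\le nk^{n-1}$ together with Abel summation; unrolling this recursion reproduces the iterated-integral value $K^{n-2s+1}\prod_{l=1}^{n}\frac{1}{2s-l}$ (which is exactly $\int_{K}^{\infty}\int_{x_{1}}^{\infty}\!\!\cdots\!\!\int_{x_{n-1}}^{\infty}x_{n}^{-2s}\,dx_{n}\cdots dx_{1}$). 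After weighting by $\binom{d-1}{n}$ and summing in $n$, a common factor $\frac{1}{2s-d}+1$ can be pulled out using $K\ge\sqrt{d}$, which finally produces the bracketed factor in $\widehat{C}(s,d)$. Substituting $K^{d-2s}\le d^{s-d/2}m^{-J(2s-d)}$ and noting that the remaining $d^{-d/2}$ is absorbed together with the $\sqrt{d}$-type factors arising in the telescoping step yields the constant $d^{s-d}$ in $\widehat{C}(s,d)$.

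The main obstacle is exactly this third step: a naive single-integral comparison produces only a factor $\frac{1}{2s-n-1}$ rather than the product $\prod_{l=1}^{n}\frac{1}{2s-l}$, so one must carry out the iterated (Abel-type) comparison carefully to get the sharper dependence on $s$ in the constant. The rest of the argument is essentially bookkeeping of constants, combining the $2^{d-1}$ from the orthant partition, the $d^{s-d}$ from matching $K$ to $m^{J}/\sqrt{d}$, and the separated $n=0$ contribution $\frac{1}{2s-1}+1$.
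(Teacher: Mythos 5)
There is a genuine gap in your third step, and it is exactly the step you flag as the main obstacle. The quantity $A_{n}=\sum_{k\ge K}k^{n-2s}$ is a one-variable moment whose exact asymptotic order is $\frac{K^{n+1-2s}}{2s-n-1}$ as $K\to\infty$; no Abel-summation recursion can replace the coefficient $\frac{1}{2s-n-1}$ by $\prod_{l=1}^{n}\frac{1}{2s-l}$, and the claimed bound $A_{n}\le K^{n-2s+1}\prod_{l=1}^{n}\frac{1}{2s-l}$ is simply false for large $K$ whenever $\frac{1}{2s-n-1}>\prod_{l=1}^{n}\frac{1}{2s-l}$. For instance with $n=2$ and $2s=4$ one has $A_{2}\sim K^{-1}$ while your bound asserts $A_{2}\le K^{-1}/6$; worse, as $2s\downarrow n+1$ the true coefficient $\frac{1}{2s-n-1}$ blows up while the product stays bounded by $\frac{1}{n!}$. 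The product $\prod_{l=1}^{n}\frac{1}{2s-l}$ is not a one-dimensional object: it is the value of the $n$-fold nested integral $\int_{x_{0}}^{\infty}\cdots\int_{x_{n-1}}^{\infty}x_{n}^{-2s}\,dx_{n}\cdots dx_{1}=x_{0}^{n-2s}\prod_{l=1}^{n}\frac{1}{2s-l}$, i.e.\ it arises from summing $(|j_{1}|+j_{2}+\cdots+j_{n+1})^{-2s}$ over $n$ \emph{separate} lattice coordinates $j_{2},\dots,j_{n+1}\ge 1$ one at a time, each summation raising the exponent by one. That is what the paper's proof does: it works with the $\ell^{1}$ norm, covers $\{\|j\|_{1}\ge m^{J}\}$ by the $d$ sets $\{|j_{i}|\ge m^{J}/d\}$, uses sign symmetry to extract $2^{d-1}$, splits according to which of the remaining $d-1$ coordinates are nonzero to get $\tbinom{d-1}{n}$, and then applies the comparison $\sum_{n\ge 1}(a+n)^{-\imath}\le\frac{1}{\imath-1}a^{1-\imath}$ iteratively across those coordinates. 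Your $\ell^{\infty}$-shell decomposition collapses the $d-1$ transverse coordinates into the single shell-count polynomial $(2k+1)^{d}-(2k-1)^{d}$, after which the multi-coordinate structure that generates the product is gone and cannot be recovered.

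To be fair, your route does yield an inequality of the form $\sum_{\|j\|_{2}\ge m^{J}}\|j\|_{2}^{-2s}\le C'(s,d)\,m^{-J(2s-d)}$ with a different finite constant (replace the products by $\frac{1}{2s-n-1}$ and carry the surplus factors $2d\cdot 2^{n}$ from the shell count and $d^{d/2}$ from $K=\lceil m^{J}/\sqrt{d}\rceil$), and since only finiteness of the constant is used downstream, such a bound would serve the paper equally well. But as a proof of the lemma as stated, with the stated $\widehat{C}(s,d)$, the mechanism you invoke to match the constant does not exist; moreover the concluding claim that $d^{-d/2}$ and the $\sqrt{d}$-type factors are ``absorbed'' goes the wrong way, since $2d\cdot 2^{n}$ and $d^{d/2}$ are all at least $1$ and therefore enlarge rather than shrink your constant relative to $2^{d-1}d^{s-d}$.
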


\begin{proof}
The proof is given in the Appendix section.
\end{proof}

As mentioned previously the   perturbation sequence  $\underline{\varepsilon}$ satisfying  \eqref{raodongxulie}
does not necessarily  sit   in $\ell^{\alpha}(\mathbb{Z}^{d})$. When it sits   in $\ell^{\alpha}(\mathbb{Z}^{d})$, we establish the
approximation error $||(I-\mathcal{S}_{\phi; \underline{\varepsilon}}^{N})f||_{L^{2}}$ in the following lemma,
which will be used in the proof of   Theorem \ref{Theorem xe4}.

\begin{lem}\label{Theorem x2}
Suppose that both  $\phi\in H^{s}(\mathbb{R}^{d})$ and $\widetilde{\phi}\in
H^{-s}(\mathbb{R}^{d})$  are  $M$-refinable, where $s>d/2$  and
$||\widehat{\widetilde{\phi}}||_{L^{\infty}(\mathbb{R}^{d})}<\infty$.
Moreover, suppose that  $\underline{\varepsilon}:=\{\varepsilon_{k}\}_{k\in\mathbb{Z}^{d}}$ lies in
$\ell^{\alpha}(\mathbb{Z}^{d})$
with  $0<\alpha<\min\{2s-d, 2\}$. Then there exists a positive
constant $C_{2}(s,\alpha,d)$ such that
for any $f\in H^{s}(\mathbb{R}^{d})$ and $N\geq
\frac{2s+2-\alpha}{2-\alpha}\log_{m}d$,
\begin{align}\label{raodong1}\begin{array}{lllll}   ||(I-\mathcal{S}_{\phi; \underline{\varepsilon}}^{N})f||_{L^{2}}
\leq ||(I-\mathcal{S}_{\phi}^{N})f||_{L^{2}}+C_{2}(s,
\alpha,d)||f||_{H^{s}(\mathbb{R}^{d})}||\underline{\varepsilon}||_{\max}
m^{-N(\frac{4s+(\alpha-2)d}{2s-\alpha+2}+d)/2},
\end{array}\end{align}
where
\begin{align}\label{fszd}\begin{array}{lllll}  ||\underline{\varepsilon}||_{\max}:=\max\big\{||\underline{\varepsilon}||_{\ell^{2}(\mathbb{Z}^{d})},
||\underline{\varepsilon}||^{\alpha/2}_{\ell^{\alpha}(\mathbb{Z}^{d})}\big\}.\end{array}\end{align}
\end{lem}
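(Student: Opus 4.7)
The plan is to control $\|(I-\mathcal{S}_{\phi;\underline{\varepsilon}}^{N})f\|_{L^{2}}$ by splitting off the unperturbed approximation error and then estimating the operator norm of the perturbation difference $\mathcal{S}_{\phi}^{N}-\mathcal{S}_{\phi;\underline{\varepsilon}}^{N}$. By the triangle inequality the task reduces to bounding $\|(\mathcal{S}_{\phi}^{N}-\mathcal{S}_{\phi;\underline{\varepsilon}}^{N})f\|_{L^{2}}$. I would rewrite
\[(\mathcal{S}_{\phi}^{N}-\mathcal{S}_{\phi;\underline{\varepsilon}}^{N})f=\sum_{k\in\mathbb{Z}^{d}}d_{k}(f)\,\phi^{s}_{N,k},\qquad d_{k}(f)=\langle f,g_{k}\rangle,\]
with $g_{k}(x)=m^{N(d/2+s)}\bigl[\widetilde{\phi}(M^{N}x-k)-\widetilde{\phi}(M^{N}x-k-\varepsilon_{k})\bigr]$, and invoke the Bessel bound for $\{\phi(\cdot-k)\}_{k}$ in $L^{2}$ (finite since $s>d/2$ yields $\|[\widehat{\phi},\widehat{\phi}]_{0}\|_{L^{\infty}(\mathbb{T}^{d})}<\infty$). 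A rescaling change of variable then gives
\[\|(\mathcal{S}_{\phi}^{N}-\mathcal{S}_{\phi;\underline{\varepsilon}}^{N})f\|_{L^{2}}^{2}\le B_{\phi}\,m^{-2Ns}\sum_{k\in\mathbb{Z}^{d}}|d_{k}(f)|^{2}.\]

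To estimate $\sum_{k}|d_{k}(f)|^{2}$ I use Bessel duality: the sum is bounded by $\|f\|_{H^{s}}^{2}$ times the squared norm of the synthesis operator $S_{g}\colon\ell^{2}\to H^{-s}$, $S_{g}c=\sum_{k}c_{k}g_{k}$. A Plancherel computation together with the substitution $\eta=M^{-N}\xi$ produces
\[\|S_{g}c\|_{H^{-s}}^{2}=(2\pi)^{-d}\,m^{2Ns}\int_{\mathbb{R}^{d}}|\widehat{\widetilde{\phi}}(\eta)|^{2}(1+\|M^{N}\eta\|_{2}^{2})^{-s}|\sigma_{c}(\eta)|^{2}\,d\eta,\]
where $\sigma_{c}(\eta)=\sum_{k}c_{k}e^{-i\eta\cdot k}\bigl[1-e^{-i\eta\cdot\varepsilon_{k}}\bigr]$. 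Because $\varepsilon_{k}$ depends on $k$, the oscillation $e^{-i\eta\cdot k}$ cannot be exploited through a direct Parseval identity in $k$; instead I apply Cauchy--Schwarz to the $k$-sum together with the interpolation inequality $|1-e^{-i\eta\cdot\varepsilon_{k}}|\le 2|\eta|^{\beta}\|\varepsilon_{k}\|_{2}^{\beta}$ for $\beta\in[0,1]$, which gives the pointwise bound $|\sigma_{c}(\eta)|^{2}\le 4|\eta|^{2\beta}\|c\|_{\ell^{2}}^{2}\sum_{k}\|\varepsilon_{k}\|_{2}^{2\beta}$.

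It remains to control two scalar quantities. For the weighted integral $I(\beta):=\int|\widehat{\widetilde{\phi}}(\eta)|^{2}(1+\|M^{N}\eta\|_{2}^{2})^{-s}|\eta|^{2\beta}\,d\eta$, I would split the domain into the cap $\{\|\eta\|_{2}\le m^{-N}\}$, the annulus $\{m^{-N}<\|\eta\|_{2}\le 1\}$, and the tail $\{\|\eta\|_{2}>1\}$, and, using $\widehat{\widetilde{\phi}}\in L^{\infty}$ together with $\widetilde{\phi}\in H^{-s}$, obtain $I(\beta)\le C(2s-2\beta-d)^{-1}m^{-N(2\beta+d)}$ whenever $\beta<s-d/2$. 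Lemma~\ref{Lemma Xr} supplies the lattice tail estimate needed when controlling the bracket sum from the high-frequency contributions of $\widetilde{\phi}$, and the hypothesis $N\ge\frac{2s+2-\alpha}{2-\alpha}\log_{m}d$ is what keeps the constants produced by the annular integral and this tail sum finite. For the discrete sum I would use H\"older to interpolate between the $\ell^{\alpha}$ and $\ell^{2}$ norms of $\underline{\varepsilon}$: if $2\beta\in[\alpha,2]$ and $\theta=(2\beta-\alpha)/(2-\alpha)$, then $\sum_{k}\|\varepsilon_{k}\|_{2}^{2\beta}\le\|\underline{\varepsilon}\|_{\ell^{\alpha}}^{\alpha(1-\theta)}\|\underline{\varepsilon}\|_{\ell^{2}}^{2\theta}\le\|\underline{\varepsilon}\|_{\max}^{2}$.

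Putting the pieces together yields $\|(\mathcal{S}_{\phi}^{N}-\mathcal{S}_{\phi;\underline{\varepsilon}}^{N})f\|_{L^{2}}^{2}\le C\|f\|_{H^{s}}^{2}\|\underline{\varepsilon}\|_{\max}^{2}(2s-2\beta-d)^{-1}m^{-N(2\beta+d)}$. Choosing $\beta$ to be the solution of $2\beta=(4s+(\alpha-2)d)/(2s-\alpha+2)$ turns the exponent into the claimed $-2N\bigl[(4s+(\alpha-2)d)/(2s-\alpha+2)+d\bigr]/2$; a direct computation shows that the condition $\alpha<\min\{2s-d,2\}$ places this $\beta$ strictly inside $[\alpha/2,s-d/2)$, so both the H\"older interpolation and the bound for $I(\beta)$ are admissible and produce a finite constant $C_{2}(s,\alpha,d)$. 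The principal obstacle throughout is that the coupling between $k$ and $\varepsilon_{k}$ destroys the Parseval-type orthogonality in $k$; the Cauchy--Schwarz--H\"older combination is the essential workaround, and its effect is absorbed into the single perturbation norm $\|\underline{\varepsilon}\|_{\max}$.
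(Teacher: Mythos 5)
Your proposal is correct and reaches exactly the stated exponent, but the core technical estimate is organized quite differently from the paper's, so the comparison is worth recording. The paper bounds each coefficient $|\langle f, m^{Nd/2}(\widetilde{\phi}(M^{N}\cdot-k)-\widetilde{\phi}(M^{N}\cdot-k-\varepsilon_{k}))\rangle|^{2}$ separately by Cauchy--Schwarz, which leaves the integral $\int_{\mathbb{R}^{d}}(1+\|\xi\|_{2}^{2})^{-s}|1-e^{\textbf{i}(M^{T})^{-N}\varepsilon_{k}\cdot\xi}|^{2}d\xi$; it then periodizes this over $2\pi\mathbb{Z}^{d}$, splits the lattice sum at radius $m^{J}$, uses $|\sin|^{2}\le|\sin|^{\alpha}\le|\cdot|^{\alpha}$ on the far translates (where Lemma \ref{Lemma Xr} controls the tail $\sum_{\|j\|_{2}\ge m^{J}}\|j\|_{2}^{-2(s-\alpha/2)}$) and the quadratic bound on the near ones, optimizes $J=\frac{2-\alpha}{2s+2-\alpha}N$, and only afterwards sums over $k$, getting $\sum_{k}(\|\varepsilon_{k}\|_{2}^{\alpha}+\|\varepsilon_{k}\|_{2}^{2})\le 2\|\underline{\varepsilon}\|_{\max}^{2}$. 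You instead dualize the whole analysis map against the synthesis operator $S_{g}:\ell^{2}\to H^{-s}$, replace the two-regime splitting by the single fractional bound $|1-e^{-i\eta\cdot\varepsilon_{k}}|\le 2\|\eta\|_{2}^{\beta}\|\varepsilon_{k}\|_{2}^{\beta}$, evaluate a genuinely continuous integral in polar coordinates (no periodization), and recover $\|\underline{\varepsilon}\|_{\max}$ by H\"older interpolation of $\sum_{k}\|\varepsilon_{k}\|_{2}^{2\beta}$ between $\ell^{\alpha}$ and $\ell^{2}$; optimizing over $\beta$ rather than $J$ gives the identical rate, since $2\beta=\frac{4s+(\alpha-2)d}{2s-\alpha+2}$ corresponds precisely to the paper's optimal $J$, and your verification that this $\beta$ lies in $[\alpha/2,\,s-d/2)$ is exactly where the hypothesis $\alpha<\min\{2s-d,2\}$ enters. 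Your route is arguably cleaner. Two small inaccuracies, neither of which is a gap: in your scheme Lemma \ref{Lemma Xr} and the hypothesis $N\ge\frac{2s+2-\alpha}{2-\alpha}\log_{m}d$ are not actually needed (the paper requires the latter only to guarantee $m^{J}\ge d$ when invoking the former), so the role you assign them is misattributed; and the duality step $\sum_{k}|\langle f,g_{k}\rangle|^{2}\le\|f\|_{H^{s}}^{2}\|S_{g}\|^{2}$ should be justified by first running the estimate on finitely supported sequences $c$.
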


\begin{proof}
We only need to prove that
\begin{align}\label{xiangdui05}\begin{array}{lllll}  ||(\mathcal{S}_{\phi; \underline{\varepsilon}}^{N}-\mathcal{S}_{\phi}^{N})f||_{L^{2}}\leq C_{2}(s,
\alpha,d)||f||_{H^{s}(\mathbb{R}^{d})}||\underline{\varepsilon}||_{\max}
m^{-N(\frac{4s+(\alpha-2)d}{2s-\alpha+2}+d)/2}.\end{array}\end{align}
The above  inequality will be proved in
subsection \ref{proof of lemma3.3}.
\end{proof}

\begin{lem}\label{Theorem x3}Suppose that
both   $\phi\in
H^{\varsigma}(\mathbb{R}^{d})$ and  $\widetilde{\phi}\in H^{-s}(\mathbb{R}^{d})$ are  $M$-refinable, where  $\varsigma>s>d/2$.
Moreover,   a  sequence
$\underline{\eta} \in  \ell^{\alpha}(\mathbb{Z}^{d})$  with
$0<\alpha<\min\{2s-d, 2\}$.
Then there exists a constant  $\widetilde{C}_{2}(s,\varsigma,\alpha,d)
>0$  such that for any  $N\geq \frac{2s+2-\alpha}{2-\alpha}\log_{m}d$, $\lambda \in \mathbb{R}^{d}$  and every   $ f\in
H^{\varsigma}(\mathbb{R}^{d})$,
\begin{align}
\begin{array}{lllll} \label{ghgvb}
||\big(I-\mathcal{S}_{\phi;\underline{\eta}}^{N}\big)(f-f(\cdot+M^{-N}\lambda))||_{L^2}
\leq
\widetilde{C}_{2}(s,\varsigma,\alpha,d)||f||_{H^{\varsigma}(\mathbb{R}^{d})}m^{-N\vartheta(s,\varsigma,\alpha,\zeta)}||\lambda||_{2}^{\zeta},
\end{array}\end{align}
where
$\vartheta(s,\varsigma,\alpha,\zeta):=\zeta+\min\big\{(\varsigma-s)/2,  \big(\frac{4s+(\alpha-2)d}{2s-\alpha+2}+d\big)/2\big\}$  with
$\zeta$   defined in \eqref{constant0}.
\end{lem}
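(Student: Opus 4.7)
My plan is to apply Lemma~\ref{Theorem x2} to the translation difference $g := f - f(\cdot + M^{-N}\lambda)$. Since Sobolev norms are translation invariant, $g \in H^{\varsigma}(\mathbb{R}^d)\subseteq H^{s}(\mathbb{R}^d)$, and Lemma~\ref{Theorem x2} (with $\underline{\eta}$ in place of $\underline{\varepsilon}$) splits the left-hand side as
\begin{equation*}
\|(I-\mathcal{S}^{N}_{\phi;\underline{\eta}})g\|_{L^{2}} \leq \|(I-\mathcal{S}^{N}_{\phi})g\|_{L^{2}} + C_{2}(s,\alpha,d)\,\|g\|_{H^{s}}\,\|\underline{\eta}\|_{\max}\,m^{-N\beta_{2}},
\end{equation*}
where I abbreviate $\beta_{2}:=\tfrac{1}{2}\bigl(\tfrac{4s+(\alpha-2)d}{2s-\alpha+2}+d\bigr)$. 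It remains to dominate each of the two pieces by a constant multiple of $m^{-N\vartheta}\|\lambda\|_{2}^{\zeta}\|f\|_{H^{\varsigma}}$.

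The core technical input is a Fourier-side smoothness estimate on $g$. Since $\widehat{g}(\xi)=\widehat{f}(\xi)\bigl(1-e^{\mathbf{i} M^{-N}\lambda\cdot \xi}\bigr)$ and $|1-e^{\mathbf{i}t}|\leq 2^{1-\zeta}|t|^{\zeta}$ for $\zeta\in[0,1]$, combined with the isotropy of $M$ (which provides an absolute constant $K$ with $\|M^{-N}\lambda\|_{2}\leq Km^{-N}\|\lambda\|_{2}$), one obtains
\begin{equation*}
\|g\|_{H^{\mu}(\mathbb{R}^{d})}\leq C(\zeta)\,m^{-N\zeta}\,\|\lambda\|_{2}^{\zeta}\,\|f\|_{H^{\mu+\zeta}(\mathbb{R}^{d})}\quad\text{for every } \mu\in\mathbb{R}\text{ with }\mu+\zeta\leq \varsigma.
\end{equation*}
The choice $\mu=s$ is admissible (since $\zeta\leq \varsigma-s$ by definition) and yields $\|g\|_{H^{s}}\leq Cm^{-N\zeta}\|\lambda\|_{2}^{\zeta}\|f\|_{H^{\varsigma}}$. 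Inserting this into the perturbation term above produces a contribution of order $\|\underline{\eta}\|_{\max}\,m^{-N(\zeta+\beta_{2})}\|\lambda\|_{2}^{\zeta}\|f\|_{H^{\varsigma}}$, which is dominated by $m^{-N\vartheta}\|\lambda\|_{2}^{\zeta}\|f\|_{H^{\varsigma}}$ because $\vartheta\leq \zeta+\beta_{2}$.

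For the truncation term $\|(I-\mathcal{S}^{N}_{\phi})g\|_{L^{2}}$, I would invoke Theorem~\ref{Theorem x1} with $g$ in place of $f$ and an intermediate regularity index $\mu_{\ast}\in(s,\varsigma-\zeta]$ in place of $\varsigma$. Using $\|\cdot\|_{L^{2}}\leq \|\cdot\|_{H^{s}}$ and the Fourier bound at level $\mu_{\ast}$ gives
\begin{equation*}
\|(I-\mathcal{S}^{N}_{\phi})g\|_{L^{2}}\leq C(s,\mu_{\ast})\,\|g\|_{H^{\mu_{\ast}}}\,m^{-N(\mu_{\ast}-s)/2}\leq C'\,m^{-N(\zeta+(\mu_{\ast}-s)/2)}\|\lambda\|_{2}^{\zeta}\|f\|_{H^{\varsigma}},
\end{equation*}
so that the overall decay exponent is $\zeta+(\mu_{\ast}-s)/2$. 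A case split on which branch of the $\min$ defining $\vartheta$ is active, together with an optimal choice of $\mu_{\ast}$ in each case (pushing $\mu_{\ast}$ toward $\varsigma-\zeta$ in the branch governed by $(\varsigma-s)/2$, and truncating at $s+2\beta_{2}$ in the other branch while appealing once more to Lemma~\ref{Theorem x2} to transfer the decay across), will reassemble to an exponent at least as large as $\vartheta$. Collecting all constants absorbs $C(s,\mu_{\ast})$, $C_{2}(s,\alpha,d)$, and the (finite) quantity $\|\underline{\eta}\|_{\max}$ into the single constant $\widetilde{C}_{2}(s,\varsigma,\alpha,d)$.

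The main obstacle is precisely this balancing of exponents: one must coordinate the ``smoothness-loss'' parameter $\zeta$ (which converts $H^{\mu+\zeta}$-regularity of $f$ into an $m^{-N\zeta}\|\lambda\|_{2}^{\zeta}$ gain on $\|g\|_{H^{\mu}}$) against the truncation rate in Theorem~\ref{Theorem x1} and the perturbation rate $\beta_{2}$ in Lemma~\ref{Theorem x2}, so that the two error sources add up to exactly $\vartheta=\zeta+\min\{(\varsigma-s)/2,\beta_{2}\}$. Once the case analysis is carried out and the intermediate index $\mu_{\ast}$ is chosen correctly in each regime, the triangle inequality and the two estimates just described yield the stated bound~\eqref{ghgvb}.
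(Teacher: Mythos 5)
Your decomposition is the same as the paper's: split off the perturbation part via Lemma \ref{Theorem x2} \eqref{xiangdui05}, control the translation difference $g=f-f(\cdot+M^{-N}\lambda)$ in $H^{s}(\mathbb{R}^{d})$ through $|1-e^{\mathbf{i}t}|\le 2^{1-\zeta}|t|^{\zeta}$ (this is exactly the paper's \eqref{dddf}), and handle the remaining truncation term with Theorem \ref{Theorem x1}. The perturbation term is treated correctly and identically to the paper, and your Fourier-side estimate $\|g\|_{H^{\mu}}\le C m^{-N\zeta}\|\lambda\|_{2}^{\zeta}\|f\|_{H^{\mu+\zeta}}$ for $\mu+\zeta\le\varsigma$ is right.

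The gap is in the truncation term, precisely at the point you yourself flag as ``the main obstacle.'' Writing $\beta_{2}:=\bigl(\frac{4s+(\alpha-2)d}{2s-\alpha+2}+d\bigr)/2$ as you do, your choice $\mu_{\ast}\in(s,\varsigma-\zeta]$ yields the exponent $\zeta+(\mu_{\ast}-s)/2$, whose supremum over admissible $\mu_{\ast}$ is $\zeta+(\varsigma-s-\zeta)/2=\zeta/2+(\varsigma-s)/2$. In the branch where $\min\{(\varsigma-s)/2,\beta_{2}\}=(\varsigma-s)/2$ the target is $\vartheta=\zeta+(\varsigma-s)/2$, which exceeds what you obtain by $\zeta/2>0$; the constraint $\mu_{\ast}+\zeta\le\varsigma$ forces the $m^{-N\zeta}\|\lambda\|_{2}^{\zeta}$ gain and the $m^{-N(\mu_{\ast}-s)/2}$ truncation rate to compete for the same $\varsigma-s$ derivatives of $f$, so you cannot extract both in full from the single hypothesis $f\in H^{\varsigma}(\mathbb{R}^{d})$. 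A similar shortfall occurs in the other branch whenever $2\beta_{2}+\zeta>\varsigma-s$, and the proposed ``transfer of decay'' via a second appeal to Lemma \ref{Theorem x2} is not spelled out in a way that repairs this. Hence the claim that the case analysis ``will reassemble to an exponent at least as large as $\vartheta$'' does not hold as stated. For comparison, the paper sidesteps the issue by asserting in \eqref{zjxc} the bound $\|(I-\mathcal{S}_{\phi}^{N})g\|_{L^2}\le C(s,\varsigma)m^{-N(\varsigma-s)/2}\|g\|_{H^{s}(\mathbb{R}^{d})}$ --- that is, \eqref{bound1} with the $H^{s}$ norm of the argument on the right --- and then inserting \eqref{dddf}; since \eqref{bound1} as proved controls the error by the $H^{\varsigma}$ norm, the difficulty you encountered is genuine and is not resolved by your interpolation device.
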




\begin{proof}
We first  estimate
$||\big(I-\mathcal{S}_{\phi;\underline{\eta}}^{N}\big)\big(f-f(\cdot+M^{-N}\lambda)\big)||_{L^2}$
as follows,
\begin{align} \begin{array}{lllll} \label{raodong} \displaystyle ||\big(I-\mathcal{S}_{\phi;\underline{\eta}}^{N}\big)\big(f-f(\cdot+M^{-N}\lambda)\big)||_{L^2}\\
\displaystyle \leq
 ||\big(I-\mathcal{S}_{\phi}^{N}\big)\big(f-f(\cdot+M^{-N}\lambda)\big)||_{L^2}+||\big(\mathcal{S}_{\phi;\underline{\eta}}^{N}-\mathcal{S}_{\phi}^{N}\big)\big(f-f(\cdot+M^{-N}\lambda)\big)||_{L^2}\\
\displaystyle \leq
 ||\big(I-\mathcal{S}_{\phi}^{N}\big)\big(f-f(\cdot+M^{-N}\lambda)\big)||_{L^2}\\
 \ \displaystyle +C_{2}(s,
\alpha,d) ||\underline{\eta}||_{\max}m^{-N(\frac{4s+(\alpha-2)d}{2s-\alpha+2}+d)/2}||f-f(\cdot+M^{-N}\lambda)||_{H^{s}(\mathbb{R}^{d})},
\end{array}\end{align}
where first and second inequalities are derived from the triangle inequality and Lemma  \ref{Theorem x2} \eqref{xiangdui05}, respectively.

Invoking  \eqref{bound1}, we get
\begin{align}\label{zjxc} \begin{array}{lllll} ||\big(I-\mathcal{S}_{\phi}^{N}\big)\big(f-f(\cdot+M^{-N}\lambda)\big)||_{L^2}\leq C(s,
\varsigma) m^{-N(\varsigma-s)/2}||f-f(\cdot+M^{-N}\lambda)||_{H^{s}(\mathbb{R}^{d})}. \end{array}\end{align}
 On the other hand,
\begin{align}\label{dddf}\begin{array}{lllll}
 \displaystyle||f-f(\cdot+M^{-N}\lambda)||_{H^{s}(\mathbb{R}^{d})}\\
= \displaystyle\Big[\frac{1}{(2\pi)^{d}}\int_{\mathbb{R}^{d}}|\widehat{f}(\xi)(1-e^{i(M^{T})^{-N}\lambda\cdot\xi})|^{2}
(1+||\xi||^{2}_{2})^{s}d\xi\Big]^{1/2}\\
 \displaystyle\leq\Big[\frac{1}{(2\pi)^{d}}\int_{\mathbb{R}^{d}}4|\sin\big((M^{T})^{-N}\lambda\cdot\xi/2\big)|^{2\zeta}
 \displaystyle|\widehat{f}(\xi)|^{2}(1+||\xi||^{2}_{2})^{s}d\xi\Big]^{1/2}\\
 \displaystyle\leq\Big[\frac{2^{2-2\zeta}||(M^{T})^{-N}\lambda||^{2\zeta}_{2}}{(2\pi)^{d}}\int_{\mathbb{R}^{d}}||\xi||_{2}^{2\zeta}
|\widehat{f}(\xi)|^{2}(1+||\xi||^{2}_{2})^{s}d\xi\Big]^{1/2}\\
\leq\displaystyle2^{1-\zeta}m^{-N\zeta}||\lambda||_{2}^{\zeta}||f||_{H^{\varsigma}(\mathbb{R}^{d})},
\end{array}\end{align}
where the first and third   inequalities  are  derived from $\zeta\leq1$ and $\zeta\leq \varsigma-s,$
respectively.
Now by \eqref{raodong}, \eqref{zjxc} and \eqref{dddf} we have
\begin{align}\label{yuuu}\begin{array}{lllll} \displaystyle ||\big(I-\mathcal{S}_{\phi;\underline{\eta}}^{N}\big)(f-f(\cdot+M^{-N}\lambda))||_{L^2}\\
\displaystyle\leq\Big(C(s,
\varsigma) m^{-N(\varsigma-s)/2}+C_{2}(s,
\alpha,d) ||\underline{\eta}||_{\max}m^{-N(\frac{4s+(\alpha-2)d}{2s-\alpha+2}+d)/2}\Big)2^{1-\zeta}m^{-N\zeta}||\lambda||_{2}^{\zeta}||f||_{H^{\varsigma}(\mathbb{R}^{d})}\\
\leq 2^{1-\zeta}\max\big\{C(s,
\varsigma),
C_{2}(s, \alpha,d) \big\}||f||_{H^{\varsigma}(\mathbb{R}^{d})}m^{-N\vartheta(s,\varsigma,\alpha,\zeta)}||\lambda||_{2}^{\zeta}.\end{array}\end{align}
Define
\begin{align}\nonumber\begin{array}{lllll}
\widetilde{C}_{2}(s,\varsigma,\alpha,d):=2^{1-\zeta}\max\big\{C(s,
\varsigma),
\ \ C_{2}(s, \alpha,d) \big\}
\end{array}\end{align}
to conclude the proof of \eqref{ghgvb}.
\end{proof}

\subsection{Proof of Theorem \ref{Theorem xe4}}\label{proofmianresult}

By Parseval identity, we have
\begin{align}\begin{array}{lllll} \label{split1}
\displaystyle\big\langle f,
m^{Nd/2}\widetilde{\phi}(M^{N}\cdot-k-\varepsilon_{k}+\lambda)
-m^{Nd/2}\widetilde{\phi}(M^{N}\cdot-k-\varepsilon_{k})\big\rangle\\
=\displaystyle\frac{m^{-Nd/2}}{(2\pi)^{d}}\displaystyle\int_{\mathbb{R}^{d}}\widehat{f}(\xi)(1-e^{i(M^{T})^{-N}\lambda\cdot\xi})\overline{\widehat{\widetilde{\phi}}((M^{T})^{-N}\xi)}e^{i(M^{T})^{-N}(k+\varepsilon_{k}-\lambda)\xi}d\xi\\
=\displaystyle\big\langle f-f(\cdot+M^{-N}\lambda),
m^{Nd/2}\widetilde{\phi}(M^{N}\cdot-k-\varepsilon_{k}+\lambda)\big\rangle.\end{array}
\end{align}
Using \eqref{split1} and the triangle inequality, the error $
||(I-\mathcal{S}_{\phi;\underline{\varepsilon}}^{N})f||_{2}$ is estimated as
follows,
\begin{align} \begin{array}{lllll} \label{raodong011}
 ||(I-\mathcal{S}_{\phi;\underline{\varepsilon}}^{N})f||_{L^2}\\
\leq ||(I-\mathcal{S}_{\phi;\underline{\varepsilon}-\lambda}^{N})f||_{L^2}+
||\mathcal{S}_{\phi;\underline{\varepsilon}-\lambda}^{N}(f-f(\cdot+M^{-N}\lambda))||_{L^2}\\
\leq
||(I-\mathcal{S}_{\phi;\underline{\varepsilon}-\lambda}^{N})f||_{L^2}+
||(I-\mathcal{S}_{\phi;\underline{\varepsilon}-\lambda}^{N})(f-f(\cdot+M^{-N}\lambda))||_{L^2}+||f-f(\cdot+M^{-N}\lambda)||_{L^2}\\
=I_{1}+I_{2},
\end{array}\end{align}
where $$I_{1}=||(I-\mathcal{S}_{\phi;\underline{\varepsilon}-\lambda}^{N})f||_{L^2}, I_{2}=||(I-\mathcal{S}_{\phi;\underline{\varepsilon}-\lambda}^{N})(f-f(\cdot+M^{-N}\lambda))||_{L^2}+||f-f(\cdot+M^{-N}\lambda)||_{L^2}.$$
Since $\underline{\varepsilon}-\lambda \in  \ell^{\alpha}(\mathbb{Z}^{d})$, it follows from Lemma \ref{Theorem x3} \eqref{ghgvb} and
\eqref{dddf} that
\begin{align}
\begin{array}{lllll} \label{gbgb}
I_{2}&\displaystyle\leq\widetilde{C}_{2}(s,\varsigma,\alpha,d)||f||_{H^{\varsigma}(\mathbb{R}^{d})}m^{-N\vartheta(s,\varsigma,\alpha,\zeta)}||\lambda||_{2}^{\zeta}+2^{1-\zeta}||\lambda||_{2}^{\zeta}m^{-N\zeta}||f||_{H^{\varsigma}(\mathbb{R}^{d})}\\
&=\displaystyle(\widetilde{C}_{2}(s,\varsigma,\alpha,d)+2^{1-\zeta})||\lambda||_{2}^{\zeta}m^{-N\zeta}||f||_{H^{\varsigma}(\mathbb{R}^{d})}.
\end{array}\end{align}
By Lemma  \ref{Theorem x2} \eqref{raodong1} and  $\zeta\leq(\frac{4s+(\alpha-2)d}{2s-\alpha+2}+d)/2$, we have
\begin{align} \begin{array}{lllll} \label{huaz}
I_{1}\leq
||(I-\mathcal{S}_{\phi}^{N})f||_{L^2}+C_{2}(s,
\alpha,d)
||\underline{\varepsilon}-\lambda||_{\max}m^{-N\zeta}||f||_{H^{s}(\mathbb{R}^{d})}.
\end{array}\end{align}
Now it follows from  \eqref{raodong011},  \eqref{gbgb} and \eqref{huaz} that
  \begin{align} \begin{array}{lllll} \label{zhgj}
 ||(I-\mathcal{S}_{\phi;\underline{\varepsilon}}^{N})f||_{L^2}\\
\leq I_{1}+I_{2}\\
\leq \displaystyle||(I-\mathcal{S}_{\phi}^{N})f||_{L^2}+[(\widetilde{C}_{2}(s,\varsigma,\alpha,d)+2^{1-\zeta})||\lambda||_{2}^{\zeta}
+C_{2}(s,
\alpha,d)
||\underline{\varepsilon}-\lambda||_{\max}]m^{-N\zeta}||f||_{H^{s}(\mathbb{R}^{d})}.
\end{array}\end{align}
Define
$$C_{3}(s,\varsigma,\alpha,d):=\max\big\{(\widetilde{C}_{2}(s,\varsigma,\alpha,d)+2^{1-\zeta}),  C_{2}(s,
\alpha,d)\big\}$$ to conclude the proof.

%
%
%

\section{Approximations to functions in Sobolev spaces by nonuniform sampling}
\label{bugzecy}
This section starts with the definition of the  sum rule of a refinable function.
Let the $M$-refinable function $\phi\in H^{s}(\mathbb{R}^{d})$ be defined via the $M$-refinement  equation:
$\widehat{\phi}(M^{T}\cdot)=\widehat{a}(\cdot)\widehat{\phi}(\cdot).$
We say that $\phi$ has $\kappa+1$ sum rules if $\widehat{a}(\xi+\gamma_{j})=O(||\xi||_{2})$ as $\xi\rightarrow 0$,
where any  $\gamma_{j}\in [(M^{T})^{-1}\mathbb{Z}^{d}]/\mathbb{Z}^{d}$ with $j\neq0$ is  as  in \eqref{hanbin9}.
For the relationship  between the sum rule of $\phi$ and the approximation order of the shift-invariant space generated  from
$\phi$, readers can refer to \cite{BHBOOK}.

With the help of   Theorem  \ref{Theorem
xe4} we establish the  approximation in the following theorem, which
states that  any function in $H^{\varsigma}(\mathbb{R}^{d})$ (where $\varsigma>d/2$)
can be stably reconstructed by its  nonuniform sampling.

\begin{theo}\label{Theorem x4}  Suppose that     $\phi\in H^{\varsigma}(\mathbb{R}^{d})$ is   $M$-refinable and has $\kappa+1$
 sum rules where $d/2<\varsigma<\kappa+1$. Moreover,
the perturbation sequence  $\underline{\varepsilon}$ is as in \eqref{raodongxulie} for some  $\lambda\in \mathbb{R}^{d}$ and $0<\alpha<\min\{2s-d, 2\}$ where  $d/2<s<\varsigma$,
and $N\geq \frac{2s+2-\alpha}{2-\alpha}\log_{m}d$ is
arbitrary.
 Then  there exists a positive constant
 $C_{0}(s, \varsigma,\alpha,d)$  such that for any $ f\in H^{\varsigma}(\mathbb{R}^{d}),$
\begin{align}\begin{array}{lll} \label{irsamapp}
\|f-\sum_{k\in\mathbb{Z}^{d}}f(M^{-N}(k+\varepsilon_{k}))\phi(M^{N}\cdot-k)\|_{L^2}\\
 \leq
C_{0}(s,\varsigma,\alpha,d)||f||_{H^{\varsigma}(\mathbb{R}^{d})}\big[
m^{-(\varsigma-s)
N}+m^{-N\zeta}\big(||\lambda||_{2}^{\zeta}+
||\underline{\varepsilon}-\lambda||_{\max}\big) \big],
\end{array}
\end{align}
where $
\zeta$ is  defined in \eqref{constant0}, and as in Theorem \ref{Theorem xe4},
$||\underline{\varepsilon}-\lambda||_{\max}=\max\{||\{\varepsilon_{k}-\lambda\}_{k\in \mathbb{Z}^{d}}||_{\ell^{2}(\mathbb{Z}^{d})},
||\{\varepsilon_{k}-\lambda\}_{k\in \mathbb{Z}^{d}}||^{\alpha/2}_{\ell^{\alpha}(\mathbb{Z}^{d})}\}$.
\end{theo}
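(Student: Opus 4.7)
The plan is to realize the nonuniform-sampling series in \eqref{irsamapp} as a concrete instance of the perturbation operator $\mathcal{S}_{\phi;\underline{\varepsilon}}^{N}$ of Section \ref{sectionyy}, and then combine Theorem \ref{Theorem xe4} with Theorem \ref{Theorem x1}. The key move is to choose the dual refinable distribution to be the Dirac measure, $\widetilde{\phi}=\delta$. Because $\widehat{\widetilde{\phi}}\equiv 1$, the distribution $\widetilde{\phi}$ lies in $H^{-s}(\mathbb{R}^{d})$ exactly because $s>d/2$, is trivially $M$-refinable (with constant mask $\widehat{\widetilde{a}}\equiv 1$), and satisfies
$[\widehat{\widetilde{\phi}},\widehat{\widetilde{\phi}}]_{-\varsigma}(\xi)=\sum_{k\in\mathbb{Z}^{d}}(1+\|\xi+2\pi k\|_{2}^{2})^{-\varsigma}\in L^{\infty}(\mathbb{T}^{d})$ for $\varsigma>d/2$, so the structural hypotheses of Lemma \ref{Lemma 2.1} on $\widetilde{\phi}$ are met.

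Next, I would use the $\kappa+1$ sum rules on $\phi$, which force $\widehat{a}(\cdot+\gamma_{j})$ to vanish at the origin to order $\kappa+1$ for every nonzero $\gamma_{j}\in\Gamma_{M^{T}}$, together with the MEP identities \eqref{hanbin9}, to produce compactly supported trigonometric polynomial masks $\widehat{b^{\ell}},\widehat{\widetilde{b}^{\ell}}$ yielding a pair of dual $M$-framelets $X^{s}(\phi;\psi^{1},\ldots,\psi^{L})$ and $X^{-s}(\widetilde{\phi};\widetilde{\psi}^{1},\ldots,\widetilde{\psi}^{L})$ in $(H^{s}(\mathbb{R}^{d}),H^{-s}(\mathbb{R}^{d}))$; this is precisely the MEP construction of \cite{Hanbin1}. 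Since $\widehat{\widetilde{\phi}}\equiv 1$ and the sum rules on $\phi$ force each $\widehat{\widetilde{b}^{\ell}}$ to vanish at the origin to order $\kappa+1$, each $\widetilde{\psi}^{\ell}$ automatically has $\kappa+1$ vanishing moments, exactly as required by Theorem \ref{Theorem x1} and Theorem \ref{Theorem xe4}.

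With this choice of $\widetilde{\phi}$, a change of variables $y=M^{N}x$ gives
\begin{align*}
\Biginner{f,\,m^{N(d/2+s)}\widetilde{\phi}(M^{N}\cdot-k-\varepsilon_{k})} = m^{N(s-d/2)}\,f\bigl(M^{-N}(k+\varepsilon_{k})\bigr),
\end{align*}
so multiplying by $\phi^{s}_{N,k}=m^{N(d/2-s)}\phi(M^{N}\cdot-k)$ collapses the $m$-powers and identifies $\mathcal{S}_{\phi;\underline{\varepsilon}}^{N}f$ with the series $\sum_{k}f(M^{-N}(k+\varepsilon_{k}))\phi(M^{N}\cdot-k)$ from \eqref{irsamapp}. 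Applying Theorem \ref{Theorem xe4} bounds $\|f-\mathcal{S}_{\phi;\underline{\varepsilon}}^{N}f\|_{L^{2}}$ by $\|f-\mathcal{S}_{\phi}^{N}f\|_{L^{2}}$ plus $C_{3}(s,\varsigma,\alpha,d)\|f\|_{H^{\varsigma}(\mathbb{R}^{d})}m^{-N\zeta}(\|\lambda\|_{2}^{\zeta}+\|\underline{\varepsilon}-\lambda\|_{\max})$. Using $\|\cdot\|_{L^{2}}\leq\|\cdot\|_{H^{s}(\mathbb{R}^{d})}$ together with Theorem \ref{Theorem x1} (equivalently, with Lemma \ref{Lemma 2.1} applied directly) then bounds the truncation term by a constant multiple of $\|f\|_{H^{\varsigma}(\mathbb{R}^{d})}m^{-N(\varsigma-s)}$, and setting $C_{0}(s,\varsigma,\alpha,d)$ to be the maximum of the two leading constants delivers \eqref{irsamapp}.

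The principal obstacle is the first step: verifying that $\widetilde{\phi}=\delta$ together with the MEP-constructed wavelet generators actually produces a dual framelet pair in $(H^{s}(\mathbb{R}^{d}),H^{-s}(\mathbb{R}^{d}))$ that meets every hypothesis of Theorem \ref{Theorem xe4} and Theorem \ref{Theorem x1}, in particular the vanishing-moment requirement on each $\widetilde{\psi}^{\ell}$ and the bracket-product boundedness $\|[\widehat{\widetilde{\phi}},\widehat{\widetilde{\phi}}]_{-\varsigma}\|_{L^{\infty}(\mathbb{T}^{d})}<\infty$. Once this framework is in place, the remainder of the argument is essentially a mechanical combination of the two previous theorems via the triangle inequality.
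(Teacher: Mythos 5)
Your proposal follows essentially the same route as the paper: the paper also takes the dual refinable distribution to be the tensor-product Dirac $\Delta=\delta\otimes\cdots\otimes\delta$ (so $\widehat{\Delta}\equiv 1$ and $\Delta\in H^{-\mu}(\mathbb{R}^{d})$ for $\mu>d/2$), invokes the MEP together with the $\kappa+1$ sum rules of $\phi$ to obtain a dual framelet pair whose $\widetilde{\psi}^{\ell}$ have $\kappa+1$ vanishing moments, uses the sampling property $\langle f,\Delta\rangle=f(0)$ to identify $\mathcal{S}_{\phi;\underline{\varepsilon}}^{N}f$ with the nonuniform sampling series, and concludes by combining Theorem \ref{Theorem x1} with Theorem \ref{Theorem xe4}. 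Your verification of the hypotheses on $\widetilde{\phi}=\delta$ and the normalization computation matching $m^{N(s-d/2)}\cdot m^{N(d/2-s)}=1$ are correct, so the proposal is sound and matches the paper's argument.
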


\begin{proof}
Define  a distribution $\Delta$ on $\mathbb{R}^{d}$ by
\begin{align}\label{caiyanghanshu}\begin{array}{lll} \Delta(x_{1}, x_{2},
\ldots, x_{d}):=\delta(x_{1})\otimes\delta(x_{2})\otimes\cdots
\otimes\delta(x_{d}),\end{array}\end{align} where $\delta$ is the delta
distribution on $\mathbb{R}$, and $\otimes$ is the tensor product. It
follows from $\widehat{\delta}\equiv1$ that $\Delta\in
H^{-\mu}(\mathbb{R}^{d})$ is $M$-refinable for any $\mu>d/2$.
Since $\phi$  has $\kappa+1$ sum rules, by MEP \cite[Algorithm
4.1]{Li0}  we can design  a pair of dual $M$-framelet systems
$X^{s}(\phi;\psi^{1}, \psi^{2},\ldots, \psi^{m^{d}})$ and
$X^{-s}(\Delta;\widetilde{\psi}^{1},
 \widetilde{\psi}^{2}, \ldots, \widetilde{\psi}^{m^{d}})$ such that $\widetilde{\psi}^{1},
 \widetilde{\psi}^{2}, \ldots,$ and $\widetilde{\psi}^{m^{d}}$ have $\kappa+1$ vanishing
moments. By  the sampling property of $\delta$, we have
\begin{align}\label{integral} \langle f, \Delta\rangle=f(0).\end{align}
Combining  \eqref{rt1} and \eqref{integral}, the operators
$\mathcal{S}_{\phi}^{N}$ and $\mathcal{S}_{\phi;\underline{\varepsilon}}^{N}$
defined in \eqref{rt} and \eqref{suanzidigyi} can be expressed by
\begin{align}\label{integral1}\begin{array}{lll} \mathcal{S}_{\phi}^{N}f=\sum_{k\in\mathbb{Z}^{d}}f(M^{-N}k)\phi(M^{N}\cdot-k), \ \mathcal{S}_{\phi;\underline{\varepsilon}}^{N}f=\sum_{k\in\mathbb{Z}^{d}}f(M^{-N}(k+\varepsilon_{k}))\phi(M^{N}\cdot-k).\end{array}\end{align}
It follows from  Theorem \ref{Theorem x1} \eqref{bound1} and Theorem \ref{Theorem xe4} \eqref{jyy} that
\begin{align}\begin{array}{lll} \nonumber
||f-\sum_{k\in\mathbb{Z}^{d}}f(M^{-N}(k+\varepsilon_{k}))\phi(M^{N}\cdot-k)||_{L^2}\\
\leq ||f||_{H^{\varsigma}(\mathbb{R}^{d})}\big[ C(s, \varsigma)
m^{-(\varsigma-s)N}+C_{3}(s,\varsigma,\alpha,d)||f||_{H^{\varsigma}(\mathbb{R}^{d})}m^{-N\zeta}\big(||\lambda||_{2}^{\zeta}+
||\underline{\varepsilon}-\lambda||_{\max} \big)\\
\leq C_{0}(s,\varsigma,\alpha,d)||f||_{H^{\varsigma}(\mathbb{R}^{d})}\big[
m^{-(\varsigma-s)N}+m^{-N\zeta}\big(||\lambda||_{2}^{\zeta}+
||\underline{\varepsilon}-\lambda||_{\max} \big)\big],
\end{array}
\end{align}
where $C_{0}(s,\varsigma,\alpha,d):=\max\{C(s, \varsigma), C_{3}(s,\varsigma,\alpha,d)\}.$
\end{proof}


\begin{rem}\label{nonu}
Suppose that the perturbation sequence  $\underline{\varepsilon}^{(N)}=
\{\underline{\varepsilon}_{k}^{(N)}\}_{k\in \mathbb{Z}^{d}}$ at the  scale $N$
satisfies $\big(\sum_{k\in \mathbb{Z}^{d}}||\underline{\varepsilon}_{k}^{(N)}-\lambda^{(N)}||_{2}^{\alpha}\big)^{1/\alpha}<\infty$
for $\lambda^{(N)}\in \mathbb{R}^d.$
If
\begin{align}\label{tj}\begin{array}{lll}
||\lambda^{(N)}||_{2}^{\zeta}+
||\underline{\varepsilon}^{(N)}-\lambda^{(N)}||_{\max}
=\hbox{o}(m^{N\zeta}),\end{array}\end{align} then it follows from Theorem \ref{Theorem x4} \eqref{irsamapp} that
$\lim_{N\rightarrow\infty}\sum_{k\in\mathbb{Z}^{d}}f(M^{-N}(k+\varepsilon_{k}))\phi(M^{N}\cdot-k)=f$.
\end{rem}


\section{Comparison with  prior work: the scope of application, sampling flexibility and estimation techniques}\label{comparison}
The sampling-based  approximation in Theorem \ref{Theorem x4} \eqref{irsamapp}
enjoys  the two  required properties:  (i) scope of application:
the entire space  $H^{\varsigma}(\mathbb{R}^{d})$ with $\varsigma>d/2$; (ii)  flexibility of sampling:
the approximation is conducted   by  the nonuniform sampling  $\{f(M^{-N}(k+\varepsilon_{k}))\}$,
where the sequence $\underline{\varepsilon}=\{\varepsilon_{k}\}$ just need to satisfy \eqref{raodongxulie} and is not necessary in $\ell^{\alpha}(\mathbb{Z}^d)$.

There are some papers addressing the   approximation in
$H^{s}(\mathbb{R}^{d})$ with $s>d/2$, for example,   see
\cite{Aldroubi1,Hamm2,Butzer4,DeVore1,DeVore2,Antonio,Hamm1,Hanbin4,Qongqing,Qongqing1,Skopina2,fagep,Skopina0,Unser} and the references
therein.
In this section, we will make comparisons between the results in the present paper and the  ones in the
literature on the aspects of scope of application, flexibility of sampling and estimation techniques.

\subsection{Comparison on the applicable scope}\label{lxy}
 There are many  approximations to smooth functions in $H^{\varsigma}(\mathbb{R}^{d})$ ($\varsigma>d/2$)  in the literature such as \cite{Antonio,Hanbin4,Qongqing,Qongqing1,Skopina2,Hamm1,Hamm2}.
 These  approximations are derived from  shift-invariant spaces, and
 they only  hold for smooth functions,  but not for  the entire  space  $H^{\varsigma}(\mathbb{R}^{d})$. Clearly,   there are many functions,  including  $\phi(x_{1}, x_{2}):=B_{2}(x_{1})B_{2}(x_{2})$ in  $H^{\varsigma}(\mathbb{R}^{d})$, that  are not smooth, where $B_{2}:=\chi_{(0,1]}\star \chi_{(0,1]}$ is the cardinal
B-spline of order $2$. More precisely,  it follows from  Han \cite{Hanbin1} that  $\phi(x_{1}, x_{2})$  is in $
H^{\mu}(\mathbb{R}^{2})$ with $1<\mu<3/2.$ In \cite{fagep}, we established the   approximation to the function $f$ that satisfies
\begin{align}\label{fxu1}\begin{array}{lll}  |\widehat{f}(\xi)|\leq C(1+||\xi||_{2})^{\frac{-d-\alpha}{2}} \ \hbox{for} \ \hbox{every}\ \xi\in \mathbb{R}^{d}. \end{array}\end{align}
By Comparison \ref{duibi1}, however,  there are many functions in  $H^{\varsigma}(\mathbb{R}^{d})$  that do not satisfy \eqref{fxu1}.
That is, the above  approximations do not hold for  the entire space $H^{\varsigma}(\mathbb{R}^{d})$.
Instead  the approximation in Theorem \ref{Theorem x4} holds for the entire space $H^{\varsigma}(\mathbb{R}^{d})$.

\subsection{Comparison on the  flexibility of sampling}\label{flexible}
There exist many approximations for Sobolev spaces  available in the literature (e.g. \cite{Butzer4,Skopina0,fagep}).
But the sampling points  used for these approximations   are uniform.
 K. Hamm et.al  \cite{Hamm2,Hamm1} recently constructed  the    approximation to the univariate  functions  in  $H^{\varsigma}(\mathbb{R})\cap C^{n}(\mathbb{R})$
 by nonuniform sampling. More precisely,  the nonuniform  samples  are the values $\{f(hx_{k})\}_{k\in \mathbb{Z}}$
such that  the approximation error  is $O(h^{n})$, where the sequence  $\{x_{k}\}_{k\in \mathbb{Z}}$ is   strictly increasing   such that
$\{e^{ix_{k}x}\}_{k\in \mathbb{Z}}$ constitutes  a Riesz base for $L^2[-\pi, \pi]$.
By \cite{irregular1,Hamm1,irregular0,irregular2}, a necessary condition for a sequence
$\{x\}_{k\in \mathbb{Z}}$ to be a Riesz-type  sequence is that
there exist constants $0<q\leq Q<\infty$ such that
\begin{align}\label{toyn}\begin{array}{lll} q\leq x_{k+1}-x_{k}\leq Q. \end{array}\end{align}
A   classical sufficient condition for \eqref{toyn} is Kadec's $ 1/4$-Theorem (\cite{irregular3}), which states that
if
$|x_{k}-k|\leq1/4$ then $\{x_{k}\}_{k\in \mathbb{Z}}$ is a Riesz-type sequence.
 Instead our approximation  in \eqref{irsamapp} is conducted by the  nonuniform samples  $\{f(m^{-N}(k+\varepsilon_{k}))\}_{k\in \mathbb{Z}}$ (for the case of  dimension  $d=1$, the dilation matrix $M$ degenerates to $m$). Note that
the   sequence   $\{x_{k}\}_{k\in \mathbb{Z}}:=\{k+\varepsilon_{k}\}_{k\in \mathbb{Z}}\subseteq \mathbb{R}$ just need to satisfy \eqref{raodongxulie}.
Clearly   many  sequences satisfying \eqref{raodongxulie} are Riesz-type ones such as $\{x_{k}\}_{k\in \mathbb{Z}}=\{k+a_{0}+\frac{1}{25+k^{2}}\}_{k\in \mathbb{Z}}$
with $0<a_{0}<1/5.$
However there are also many sequences which satisfy \eqref{raodongxulie} but are not Riesz-type ones
such as $\{x_{k}\}_{k\in \mathbb{Z}}=\{k+\lambda+\theta_{k}\}_{k\in \mathbb{Z}}$ with $||\{\theta_{k}\}_{k}||_{\infty}<(1+\lambda-Q)/2$  and $\sum_{k\in \mathbb{Z}}\theta^{2}_{k}<\infty$,
and $\{x_{k}\}_{k\in \mathbb{Z}}=\{k+\lambda+\theta_{k}\}_{k\in \mathbb{Z}}$ with $||\{\theta_{k}\}_{k}||_{\infty}<\min\{\frac{1+\lambda}{2}, \frac{q-1-\lambda}{2}\}$ and $\sum_{k\in \mathbb{Z}}\theta^{2}_{k}<\infty$, where $Q$ and $q$ are as in \eqref{toyn}.
That is, the choice of  sampling sequences in the paper is  quite different from those  in  \cite{Hamm2,Hamm1}.

%
%

\subsection{Comparison on the estimation techniques}\label{ganjisong}
As mentioned in subsections \ref{lxy} and \ref{flexible},  the approximation in
the present paper is different from that in \cite{fagep} on the aspects of the applicable  scope
and  sampling flexibility. Besides the two aspects
  we next    compare the estimation techniques of the present paper with that used in  \cite{fagep}.

 (i) The estimate of $||(I-\mathcal{S}_{\phi}^{N})f||.$ The error $||(I-\mathcal{S}_{\phi}^{N})f||_{H^{s}(\mathbb{R}^{d})}$
 given in \cite[Theorem 3.2]{fagep} depends on the pointwise decay of $\widehat{f}(\xi)$ as  mentioned in \eqref{fxu1}.
However, it is very difficult to exactly   find out  the pointwise decay of $\widehat{f}(\xi)$ from the samples of $f$. Instead,  our estimation  in Lemma  \ref{Lemma 2.1} just  relays on the  global convergence  $\int_{\mathbb{R}^{d}}|\widehat{f}(\xi)|^{2}(1+||\xi||_{2}^{2})^{s}d\xi<\infty$,
and   the approximation established  in  Theorem \ref{Theorem x1} holds for all the functions in  $H^{\varsigma}(\mathbb{R}^{d})$.


(ii) The error estimate of $||(I-\mathcal{S}_{\phi;\underline{\varepsilon}}^{N})f||_{L^2}$  established in Theorem \ref{Theorem xe4} is based on   Lemma  \ref{Theorem x2}
and Lemma \ref{Theorem x3}.  The approximations in the  two lemmas were not established in \cite{fagep}.

%

%

\section{Numerical simulation}\label{diwujie}
In this section numerical simulations  are conducted  to check
the efficiency of the  approximation formula in Theorem
\ref{Theorem x4} \eqref{irsamapp}.
The perturbation sequence $\underline{\varepsilon}$ in \eqref{irsamapp} is denoted by
\begin{align}\label{szyang} \underline{\varepsilon}=\{\varepsilon_{k}:=\theta_{k}+\lambda\}_{k\in\mathbb{Z}^{d}},\end{align}
where $\theta_{k}$ is random.

\begin{figure}\label{figure4_1}
    \centering
\includegraphics[width=13cm, height=8cm]{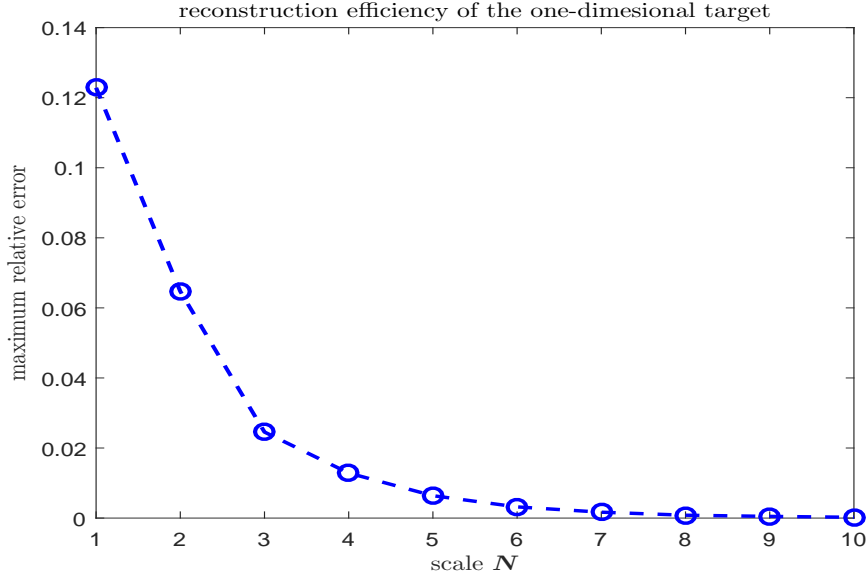}
    \caption{One-dimensional case: the maximum  relative reconstruction error vs scale $N$.}
\end{figure}

\subsection{One-dimensional case}  \label{1wei}
Let $\phi=B_{3}$, the cardinal
B-spline of order $3$ defined by
$B_{3}:=\chi_{(0,1]}\star\chi_{(0,1]}\star\chi_{(0,1]}.$
Moreover, let the target
function
\begin{align}\begin{array}{lll}\displaystyle f(x):=&\displaystyle\frac{\sin x}{x}+\frac{1}{3}B_{2}(x-2)-\frac{1}{6}\cos((x-3)^2)B_{2}(x-3)\\
&\displaystyle \ +\frac{1}{2}\cos((x-4)^2)B_{2}(x-4)-\frac{1}{2}\cos((x-5)^2)B_{2}(x-5).\end{array}\end{align}
By $\widehat{B_{2}}(\xi)=(\frac{1-e^{-\textbf{i}\xi}}{\textbf{i}\xi})^{2}$,   it is easy to check that
$f\in H^{s}(\mathbb{R})$  where $1/2<s<3/2.$  Our aim of this subsection is to  use
Theorem \ref{Theorem x4} \eqref{irsamapp} to approximate  $f$ on $[-100, 100]$.
Let  $\lambda$ in \eqref{szyang} be $1$ and the i.i.d random variables  $\{\theta_{k}\}$ obey the standard Gaussian distribution $\textbf{N}(0,1)$. Then
\begin{align} \label{bjgs} f\approx \sum^{100\times 2^{N}}_{k=-100\times 2^{N}}f(2^{-N}(k+1+\theta_{k}))\phi(2^{N}\cdot-k).\end{align}
 The  approximation  error of \eqref{bjgs}  is defined as
\begin{align}  \hbox{error}_{N}=\Big[\sum_{i\in \Lambda}\Big(f(x_{i})-\sum^{100\times 2^{N}}_{k=-100\times 2^{N}}f(2^{-N}(k+1+\theta_{k}))\phi(2^{N}x_{i}-k)\Big)^{2}\big/\sum_{j\in \Lambda}|f(x_{j})|^{2}\Big]^{1/2},\end{align}
where $\{x_{i}\}_{i\in \Lambda}=\{-100+0.01i: i=0, 1, \ldots, 20000\}$.
For each scale  $N\in \{1, 2, \ldots, 10\}$, the  approximation scheme in   \eqref{bjgs} is conducted   for $500$ trials, and the maximum of the $500$ errors is recorded in Figure 6.1. It is witnessed  in Figure 6.1 that the series in  \eqref{bjgs}
converges to $f$ on $[-100,100]$ as $N$ tends to $\infty$.

\subsection{Two-dimensional case}
\begin{figure}\label{figure42}
    \centering
\includegraphics[width=13cm, height=8cm]{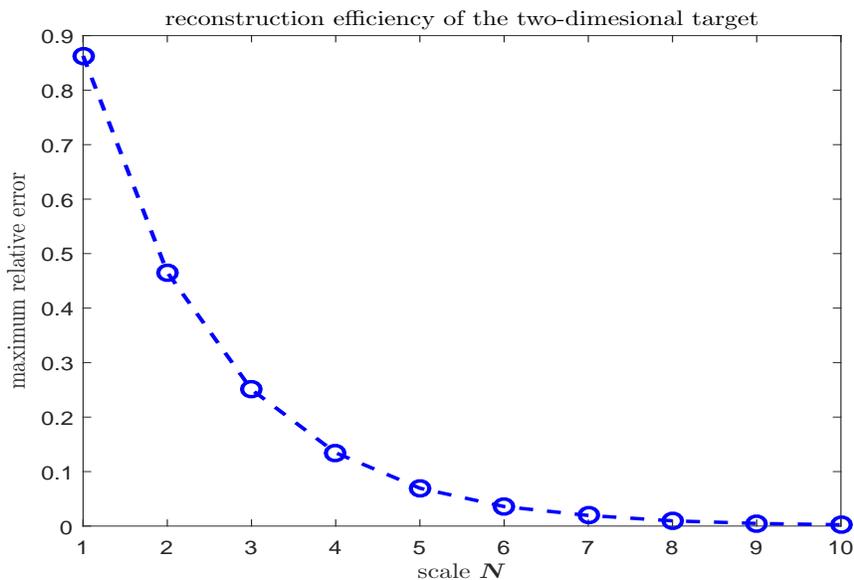}
    \caption{Two-dimensional case: the maximum  relative reconstruction error vs scale $N$.}
\end{figure}

Let $\phi(x_{1}, x_{2}):=B_{3}(x_{1})B_{3}(x_{2})$ and the target
function
$$f(x_{1}, x_{2}):=\frac{1}{(50+x^{2}_{1})(20+x^{2}_{2})}+B_{2}(x_{1})B_{2}(x_{2}).$$
Clearly,  $\phi$ is $2I_{2}$-refinable.
By \cite{Hanbin1} it is easy to check that $\phi\in H^{\mu}(\mathbb{R}^{2})$ and $f\in H^{\nu}(\mathbb{R}^{2})$,  where
 $1<\mu<5/2$ and $ 1<\nu<3/2.$
Let  $\lambda$ in \eqref{szyang} be $0.5$ and the i.i.d random variables  $\{\theta_{k}\}$ obey the standard Gaussian distribution.
Then
by  Theorem \ref{Theorem x4} \eqref{irsamapp},  we can  approximate $f$ on $[-2, 2]^{2}$ via
\begin{align} \label{approximation2}  f\approx \sum^{2^{N+1}}_{k_{1}=-2^{N+1}}\sum^{2^{N+1}}_{k_{2}=-2^{N+1}}f(2^{-N}(k+0.5+\theta_{k}))\phi(2^{N}\cdot-k),\end{align}
where $k=(k_{1}, k_{2})$.
The relative  reconstruction error is defined as
\begin{align}\label{2dreconstruction} \hbox{error}_{N}= \Big[\sum_{i\in\Lambda}|f(x_{i})-\sum^{2^{N+1}}_{k_{1}=-2^{N+1}}\sum^{2^{N+1}}_{k_{2}=-2^{N+1}}f(2^{-N}(k+0.5+\theta_{k}))\phi(2^{N}x_{i}-k)|^{2}\big/
\sum_{j\in\Lambda}|f(x_{j})|^{2}\Big]^{1/2},\end{align}
where $\{x_{i}\}_{i\in\Lambda}=\{\frac{2}{250}\ell: \ell=-250,  \ldots, 250\}\times \{\frac{2}{250}\ell': \ell'=-250,  \ldots, 250\}$
with $\times$ being the Cartesian product.
For each  scale $N\in \{1, 2, \ldots, 10\}$, the approximation scheme in \eqref{approximation2}
is conducted for $500$ trials, and  the maximum error   of the $500$ errors is  recorded in
Figure 6.2. It is witnessed  in Figure 6.2 that the series in  \eqref{2dreconstruction}
converges to $f$   as $N$ tends to $\infty$.

\section{Appendix}
\subsection{Proof of Lemma \ref{Lemma Xr}}
 We first  establish  the upper bound of $\sum_{||j||_{1}\geq
m^{J}}||j||_{1}^{-2s}$, and then    prove \eqref{op3} by  the norm  equivalence  in
$\mathbb{R}^{d}$. For $||j||_{1}\geq
m^{J}$, it is clear that there exists at least a component of $j$
such that it is not smaller than $m^{J}/d.$ Then
\begin{align} \label{zihe}\begin{array}{lllll}\displaystyle\big \{j\in \mathbb{Z}^{d}: ||j||_{1}\geq
m^{J}\big \}\subseteq \bigcup^{d}_{k=1}\big \{j=(j_{1}, j_{2},
\ldots, j_{d}): |j_{k}|\geq m^{J}/d, j_{\ell}\in \mathbb{Z},
\ell\neq k\big \}.\end{array}\end{align} By \eqref{zihe}, we have
\begin{align}\label{guji2}\begin{array}{lllll}
\displaystyle\sum_{||j||_{1}\geq
m^{J}}||j||_{1}^{-2s}&\displaystyle\leq
d\Big[\sum_{|j_{1}|\geq\textcolor[rgb]{0.00,0.07,1.00}{\lceil} m^{J}/d\rceil}\sum_{j_{2}\in
\mathbb{Z}}\cdots \sum_{j_{d}\in
\mathbb{Z}}\frac{1}{(|j_{1}|+|j_{2}|+\ldots+|j_{d}|)^{2s}} \Big],
\end{array}
\end{align}
where $\lceil x\rceil$ denotes the smallest  integer that is
larger than $x$.

Note  that the sum  in the right-hand side of \eqref{guji2}
has nothing to do with the signs of the components of $j$.
Then
\begin{align}\label{895rg}\begin{array}{lllll} \displaystyle\sum_{|j_{1}|\geq\lceil
m^{J}/d\rceil}\sum_{j_{2}\in \mathbb{Z}}\cdots \sum_{j_{d}\in
\mathbb{Z}}\frac{1}{(|j_{1}|+|j_{2}|+\ldots+|j_{d}|)^{2s}}\\
\displaystyle\leq 2^{d-1}\sum_{|j_{1}|\geq\lceil
m^{J}/d\rceil}\sum_{j_{2}\geq 0}\cdots \sum_{j_{d}\geq 0}\frac{1}{(|j_{1}|+|j_{2}|+\ldots+|j_{d}|)^{2s}}\\
\displaystyle\leq2^{d-1}\Big[\sum^{d-1}_{n=1}\tbinom{d-1}{n}I_{1,n}+I_{2}\Big],
\end{array}
\end{align}
where $\tbinom{0}{n}=0$ with $n>0$, and
\begin{align}\notag \begin{array}{lllll} \displaystyle I_{1,n}=\sum_{|j_{1}|\geq\lceil
m^{J}/d\rceil}\sum_{j_{2}\geq 1}\cdots \sum_{j_{2+n-1}\geq1}\frac{1}{(|j_{1}|+|j_{2}|+\ldots+|j_{2+n-1}|)^{2s}}, \
I_{2}=\sum^{\infty}_{j_{1}=\lceil m^{J}/d\rceil}\frac{1}{j_{1}^{2s}}.\end{array}
\end{align}


 For any $a>0 $, $N\geq1$ and $\imath>1$, it is easy
to check that
\begin{align}\label{guji4}\begin{array}{lllll}\displaystyle
\sum^{\infty}_{n=N}\frac{1}{(a+n)^{\imath}}\leq\int^{\infty}_{N-1}\frac{1}{(a+x)^{\imath}}dx=\frac{1}{\imath-1}\frac{1}{(a+N-1)^{\imath-1}}.
\end{array}\end{align}
Applying \eqref{guji4}  with  $N=1$, we obtain
\begin{align}\label{guji5}\begin{array}{lllll}
I_{1,n}
&\displaystyle\leq \prod^{n}_{l=1}\frac{1}{2s-l}\sum^{\infty}_{j_{1}=\lceil
m^{J}/d\rceil}\frac{1}{j_{1}^{2s-n}}\leq \displaystyle\sum^{\infty}_{j_{1}=\lceil
m^{J}/d\rceil}\frac{1}{j_{1}^{2s-d+1}}\prod^{n}_{l=1}\frac{1}{2s-l}
\end{array}
\end{align}
Using \eqref{guji4} again, we have
\begin{align}\label{guji5x}\begin{array}{lllll}
\displaystyle\sum^{\infty}_{j_{1}=\lceil
m^{J}/d\rceil}\frac{1}{j_{1}^{2s-d+1}}&=\displaystyle\sum^{\infty}_{j_{1}=\lceil
m^{J}/d\rceil+1}\frac{1}{j_{1}^{2s-d+1}}+\frac{1}{\lceil
m^{J}/d\rceil^{2s-d+1}}\\
&\leq \displaystyle\frac{1}{\lceil
m^{J}/d\rceil^{2s-d}}\big(\frac{1}{2s-d}+\frac{1}{\lceil
m^{J}/d\rceil}\big).
\end{array}
\end{align}
Similarly, \begin{align}\label{guji6}\begin{array}{lllll}
 \displaystyle \sum^{\infty}_{j_{1}=\lceil
m^{J}/d\rceil}\frac{1}{j_{1}^{2s}}\leq \frac{1}{\lceil
m^{J}/d\rceil^{2s-1}}\frac{1}{2s-1}+\frac{1}{\lceil
m^{J}/d\rceil^{2s}}.
\end{array}
\end{align}
 Combining \eqref{guji2}, \eqref{895rg}, \eqref{guji5},
 \eqref{guji5x} and \eqref{guji6},  we have
\begin{align}\label{guji1}\begin{array}{lllll}
 \displaystyle\sum_{||j||_{1}\geq m^{J}}||j||_{1}^{-2s}\\
 \displaystyle\leq 2^{d-1}\Big[\sum^{d-1}_{n=1}\tbinom{d-1}{n}\prod^{n}_{l=1}\frac{1}{2s-l}\frac{1}{\lceil
m^{J}/d\rceil^{2s-d}}\big(\frac{1}{2s-d}+\frac{1}{\lceil
m^{J}/d\rceil}\big)+\frac{1}{\lceil
m^{J}/d\rceil^{2s-1}}\frac{1}{2s-1}+\frac{1}{\lceil
m^{J}/d\rceil^{2s}}\Big]\\
\displaystyle\leq 2^{d-1}d^{2s-d}\Big[\sum^{d-1}_{n=1}\tbinom{d-1}{n}\prod^{n}_{l=1}\frac{1}{2s-l}(\frac{1}{2s-d}+1)+\frac{1}{2s-1}+1\Big]m^{-J(2s-d)}.
\end{array}
\end{align}
It follows from $||j||_{2}\leq ||j||_{1}\leq\sqrt{d}||j||_{2}$
that \begin{align}\label{UOH}\begin{array}{lllll}\{j\in \mathbb{Z}^d: ||j||_{2}\geq m^{J}\}\subseteq \{j\in \mathbb{Z}^d: ||j||_{1}\geq m^{J}\}.\end{array}
\end{align}
Then
\begin{align} \label{jck}\begin{array}{lllll} \displaystyle\sum_{||j||_{2}\geq m^{J}}||j||_{2}^{-2s}\leq \sum_{||j||_{1}\geq m^{J}}||j||_{2}^{-2s}\leq d^{-s}\sum_{||j||_{1}\geq m^{J}}||j||_{1}^{-2s},\end{array}\end{align}
where the first and second  inequalities are derived from \eqref{UOH} and  $||j||_{1}\leq\sqrt{d}||j||_{2}$, respectively.
Now by \eqref{guji1} and \eqref{jck},  the proof of \eqref{op3} can
be concluded.

\subsection{Proof of Lemma \ref{Theorem x2} \eqref{xiangdui05}}\label{proof of lemma3.3}
By direct computation, we get
\begin{align}\begin{array}{lllll} \label{split}
 \displaystyle \big|\langle f, m^{Nd/2}\widetilde{\phi}(M^{N}\cdot-k)
-m^{Nd/2}\widetilde{\phi}(M^{N}\cdot-k-\varepsilon_{ k})
\rangle\big|^{2}\\
= \displaystyle\frac{m^{-Nd}}{(2\pi)^{2d}} \Big|\int_{\mathbb{R}^{d}}\widehat{f}(\xi)\overline{\widehat{\widetilde{\phi}}\big((M^{T})^{-N}\xi\big)}e^{\textbf{i}(M^{T})^{-N}k\cdot\xi}(1-e^{\textbf{i}(M^{T})^{-N}\varepsilon_{ k}\cdot\xi})d\xi\Big|^{2}\\
= \displaystyle \frac{m^{-Nd}}{(2\pi)^{2d}} \Big|\int_{\mathbb{R}^{d}}\widehat{f}(\xi)(1+||\xi||_{2}^{2})^{s/2}\overline{\widehat{\widetilde{\phi}}\big((M^{T})^{-N}\xi\big)}(1+||\xi||_{2}^{2})^{-s/2}e^{\textbf{i}(M^{T})^{-N}k\cdot\xi}(1-e^{\textbf{i}(M^{T})^{-N}\varepsilon_{ k}\cdot\xi})d\xi\Big|^{2}\\
\leq
 \displaystyle\frac{m^{-Nd}}{(2\pi)^{d}}||f||^{2}_{H^{s}(\mathbb{R}^{d})}||\widehat{\widetilde{\phi}}||^{2}_{L^{\infty}(\mathbb{R}^{d})}
\int_{\mathbb{R}^{d}}(1+||\xi||_{2}^{2})^{-s}|1-e^{\textbf{i}(M^{T})^{-N}\varepsilon_{k}\cdot\xi}|^{2}d\xi\\
= \displaystyle\frac{m^{-Nd}}{(2\pi)^{d}}||f||^{2}_{H^{s}(\mathbb{R}^{d})}||\widehat{\widetilde{\phi}}||^{2}_{L^{\infty}(\mathbb{R}^{d})}\big(I_{1}(J)+I_{2}(J)\big),\end{array}
\end{align}
where the inequality is derived from the Cauchy-Schwarz inequality,
$ I_{1}(J)=\sum_{||j||_{2}\geq
m^{J}}\int_{\mathbb{T}^{d}}(1$
$+
||\xi+2j\pi||_{2}^{2})^{-s}|1-e^{\textbf{i}(M^{T})^{-N}\varepsilon_{k}\cdot(\xi+2j\pi)}|^{2}d\xi,
$ and $ I_{2}(J)=\sum_{||j||_{2}<
m^{J}}\int_{\mathbb{T}^{d}}(1+||\xi+2j\pi||_{2}^{2})^{-s}|1-e^{\textbf{i}(M^{T})^{-N}\varepsilon_{
k}\cdot(\xi+2j\pi)}|^{2}d\xi $ with  $J (>0)$ to be optimally selected.
 The
two quantities $I_{1}(J)$ and $I_{2}(J)$ are estimated as follows,
\begin{align}\label{cet6}\begin{array}{lllll}
\displaystyle I_{1}(J)&=\displaystyle\sum_{||j||_{2}\geq
m^{J}}\int_{\mathbb{T}^{d}}(1+||\xi+2j\pi||_{2}^{2})^{-s}|1-e^{\textbf{i}(M^{T})^{-N}\varepsilon_{k}\cdot(\xi+2j\pi)}|^{2}d\xi\\
&=\displaystyle4\sum_{||j||_{2}\geq
m^{J}}\int_{\mathbb{T}^{d}}(1+||\xi+2j\pi||_{2}^{2})^{-s}\big|\sin\big((M^{T})^{-N}\varepsilon_{k}\cdot(\xi+2j\pi)/2\big)\big|^{2}d\xi\\
&\leq \displaystyle4\sum_{||j||_{2}\geq
m^{J}}\int_{\mathbb{T}^{d}}(1+||\xi+2j\pi||_{2}^{2})^{-s}\big|\sin\big((M^{T})^{-N}\varepsilon_{k}\cdot(\xi+2j\pi)/2\big)\big|^{\alpha}d\xi\\
&\leq \displaystyle4||(M^{T})^{-N}\varepsilon_{
k}||^{\alpha}_{2}\sum_{||j||_{2}\geq
m^{J}}\int_{\mathbb{T}^{d}}(1+||\xi+2j\pi||_{2}^{2})^{-s}||(\xi+2j\pi)/2||^{\alpha}_{2}d\xi\\
&\leq  \displaystyle4||(M^{T})^{-N}\varepsilon_{
k}||^{\alpha}_{2}\pi^{\alpha}\sum_{||j||_{2}\geq
m^{J}}(\sqrt{d}+||j||_{2})^{\alpha}\int_{\mathbb{T}^{d}}(1+||\xi+2j\pi||_{2}^{2})^{-s}d\xi\\
&\leq  \displaystyle4||(M^{T})^{-N}\varepsilon_{
k}||^{\alpha}_{2}\pi^{\alpha}(2\pi)^{d}\sum_{||j||_{2}\geq
m^{J}}(\sqrt{d}+||j||_{2})^{\alpha}\big[1+(2\pi)^{2}(||j||_{2}-\sqrt{d})^{2}\big]^{-s}\\
&\leq \displaystyle 4||(M^{T})^{-N}\varepsilon_{
k}||^{\alpha}_{2}\pi^{\alpha}(2\pi)^{d-2s}2^{2s+\alpha}\sum_{||j||_{2}\geq
m^{J}}||j||^{-2(s-\alpha/2)}_{2}\\
&\displaystyle\leq
4\pi^{d-2s+\alpha}2^{d+\alpha}\widehat{C}(s,d)||\varepsilon_{
k}||^{\alpha}_{2}m^{-[J(2s-\alpha-d)+N\alpha]}\\
&=\displaystyle\widehat{\widetilde{C}}(s,d,\alpha)||\varepsilon_{
k}||^{\alpha}_{2}m^{-[J(2s-\alpha-d)+N\alpha]},
\end{array}
\end{align}
where $\widehat{\widetilde{C}}(s,d,\alpha)=4\pi^{d-2s+\alpha}2^{d+\alpha}\widehat{C}(s,d)$,
the second and last  inequalities  are  derived  from $\alpha\leq2$ and
Lemma \ref{Lemma Xr} \eqref{op3}, respectively. The  quantity $I_{2}$ is estimated as
follows,
\begin{align}\label{di2bufen}\begin{array}{lllll}
\displaystyle I_{2}(J)&\displaystyle\leq\sum_{||j||_{2}<
m^{J}}\int_{\mathbb{T}^{d}}(1+||\xi+2j\pi||_{2}^{2})^{-s}|1-e^{\textbf{i}(M^{T})^{-N}\varepsilon_{k}\cdot(\xi+2j\pi)}|^{2}d\xi\\
&\displaystyle\leq  (2\pi)^{d}\sum_{||j||_{2}<
m^{J}}\max_{\xi\in [0,2\pi]^{d}}|1-e^{\textbf{i}(M^{T})^{-N}\varepsilon_{k}\cdot(\xi+2j\pi)}|^{2}\\
&\displaystyle \leq 4||\varepsilon_{ k}||_{2}^{2}(2\pi)^{2d+2}m^{-2N+(2+d)J}.
\end{array}
\end{align}
 That is, $I_{1}(J)=\mbox{O}(m^{-[J(2s-\alpha-d)+N\alpha]})$ and
$I_{2}(J)=\mbox{O}(m^{-2N+(2+d)J})$. Therefore,
\begin{align}\label{laibo}
I_{1}(J)+I_{2}(J)=\mbox{O}\big(m^{-\min\{J(2s-\alpha-d)+N\alpha, \
2N-(2+d)J\}}\big).
\end{align}
It is easy to check that if choosing
$J=\frac{2-\alpha}{2s+2-\alpha}N$, then the convergence rate  in
\eqref{laibo} is optimal. Incidentally,  Lemma \ref{Lemma Xr} requires that
that   $m^{J}\geq d$.
Therefore, by  $N\geq \frac{2s+2-\alpha}{2-\alpha}\log_{m}d$  the
choice for $J=\frac{2-\alpha}{2s+2-\alpha}N$ is feasible. Now for
this choice, we have
\begin{align}\label{laibo1}
I_{1}(J)+I_{2}(J)=\mbox{O}\big(m^{-N\frac{4s+(\alpha-2)d}{2s-\alpha+2}}\big).
\end{align}
Combining  \eqref{split}, \eqref{cet6},
 \eqref{di2bufen} and \eqref{laibo1}, we have
 \begin{align}\label{yut}\begin{array}{lllll}\displaystyle
|\langle f, m^{Nd/2}\widetilde{\phi}(M^{N}\cdot-k)
-m^{Nd/2}\widetilde{\phi}(M^{N}\cdot-k-\varepsilon_{ k})\rangle|^{2}\\
\displaystyle\leq
\frac{m^{-Nd}}{(2\pi)^{d-2}}||f||^{2}_{H^{s}(\mathbb{R}^{d})}||\widehat{\widetilde{\phi}}||^{2}_{L^{\infty}(\mathbb{R}^{d})}\Big(
\widehat{\widetilde{C}}(s,d,\alpha)||\varepsilon_{
k}||^{\alpha}_{2}+4(2\pi)^{2d}||\varepsilon_{
k}||_{2}^{2}\Big)m^{-N\frac{4s+(\alpha-2)d}{2s-\alpha+2}}\\
\displaystyle\leq
\frac{m^{-Nd}}{(2\pi)^{d-2}}||f||^{2}_{H^{s}(\mathbb{R}^{d})}||\widehat{\widetilde{\phi}}||^{2}_{L^{\infty}(\mathbb{R}^{d})C_{3}(s,\alpha,d)m^{-N\frac{4s+(\alpha-2)d}{2s-\alpha+2}}}||\underline{\varepsilon}||^{2}_{\max},
\end{array}\end{align}
where
$$C_{3}(s,\alpha,d)=\widehat{\widetilde{C}}(s,d,\alpha)+4(2\pi)^{2d}.$$
 On the other hand, for any sequence $\{C_{k}\}\in
\ell^{2}(\mathbb{Z}^{d})$  we have
\begin{align}\begin{array}{lllll} \label{cet7}
||\sum_{k\in\mathbb{Z}^{d}}C_{k}\phi(\cdot-k)||^{2}_{2}&=\displaystyle(2\pi)^{-d}\int_{\mathbb{R}^{d}}|\sum_{k\in\mathbb{Z}^{d}}C_{k}e^{\textbf{i}k\cdot\xi}|^{2}|\widehat{\phi}(\xi)|^{2}d\xi\\
&=\displaystyle(2\pi)^{-d}\int_{\mathbb{T}^{d}}|\sum_{k\in\mathbb{Z}^{d}}C_{k}e^{\textbf{i}k\xi}|^{2}\sum_{\ell\in\mathbb{Z}^{d}}|\widehat{\phi}(\xi+2\ell\pi)|^{2}d\xi\\
&\displaystyle\leq||[\widehat{\phi},\widehat{\phi}]_{0}||_{L^{\infty}(\mathbb{T}^{d})}\sum_{k\in\mathbb{Z}^{d}}|C_{k}|^{2},
\end{array}
\end{align}
where the bracket product
$||[\widehat{\phi},\widehat{\phi}]_{0}||_{L^{\infty}(\mathbb{T}^{d})}$
is defined in \eqref{opq}.
 Then  it follows  from \eqref{cet7}  and  \eqref{yut}  that
\begin{align}\label{ghuilai}\begin{array}{lllll} ||(\mathcal{S}_{\phi}^{N}-\mathcal{S}_{\phi; \underline{\varepsilon}}^{N})f||^{2}_{2}\\
=\displaystyle||\sum_{k\in \mathbb{Z}^{d}} \langle f,
m^{N(d/2+s)}\widetilde{\phi}(M^{N}\cdot-k)\rangle
\phi^{s}_{N,k}-\sum_{k\in \mathbb{Z}^{d}} \langle f,
m^{N(d/2-s)}\widetilde{\phi}(M^{N}\cdot-k-\varepsilon_{k})\rangle
\phi^{s}_{N,k}||_{2}^{2} \\
 =\displaystyle||\sum_{k\in \mathbb{Z}^{d}} \langle f,
m^{Nd/2}\widetilde{\phi}(M^{N}\cdot-k)-m^{Nd/2}\widetilde{\phi}(M^{N}\cdot-k-\varepsilon_{
k})\rangle
\phi_{N,k}||_{2}^{2} \\
\leq
\displaystyle||[\widehat{\phi},\widehat{\phi}]_{0}||_{L^{\infty}(\mathbb{T}^{d})}\sum_{k\in
\mathbb{Z}^{d}}|\langle f,m^{Nd/2}\widetilde{\phi}(M^{N}\cdot-k)
-m^{Nd/2}\widetilde{\phi}(M^{N}\cdot-k-\varepsilon_{ k}) \rangle|^{2}\\
\leq
\displaystyle\frac{||f||^{2}_{H^{s}(\mathbb{R}^{d})}}{(2\pi)^{d-2}}||\widehat{\widetilde{\phi}}||^{2}_{L^{\infty}(\mathbb{R}^{d})}||[\widehat{\phi},\widehat{\phi}]_{0}||_{L^{\infty}(\mathbb{T}^{d})}
C_{3}(s,\alpha,d)
||\underline{\varepsilon}||^{2}_{\max}m^{-N[\frac{4s+(\alpha-2)d}{2s-\alpha+2}+d]},
\end{array}\end{align}
where $\phi_{N,k}=m^{Nd/2}\phi(M^{N}\cdot-k),$ and
$||\underline{\varepsilon}||_{\max}$ is defined in \eqref{fszd}.
 Now we choose
\begin{align}\label{C2}\begin{array}{lllll}  C_{2}(s, \alpha,d):=||\widehat{\widetilde{\phi}}||_{L^{\infty}(\mathbb{R}^{d})}\sqrt{\frac{||[\widehat{\phi},\widehat{\phi}]_{0}||_{L^{\infty}(\mathbb{T}^{d})}}{(2\pi)^{d-2}}
C_{3}(s,\alpha,d)}\end{array}\end{align} to conclude the proof
of \eqref{xiangdui05}.

\textbf{ Acknowledgements}: The authors  would like to thank the   reviewers for their valuable suggestions which improve the presentation of
the paper.

\end{document}